\newtheoremstyle{plainNoItalics}{}{}{\normalfont}{}{\bfseries}{.}{ }{}
\theoremstyle{plain}
\newtheorem{thm}{Theorem}[section]
\theoremstyle{plainNoItalics}
\newtheorem{defn}[thm]{Definition}
\newtheorem{rem}[thm]{Remark}
\newtheorem{prop}[thm]{Proposition}
\newcommand{\beq}{\begin{equation}}
\newcommand{\eeq}{\end{equation}}
\newcommand{\beqa}{\begin{eqnarray}}
\newcommand{\eeqa}{\end{eqnarray}}
\newcommand{\bit}{\begin{itemize}}
\newcommand{\eit}{\end{itemize}}
\newcommand{\bedef}{\begin{defn}}
\newcommand{\edefn}{\end{defn}}
\newcommand{\bpro}{\begin{prop}}
\newcommand{\epro}{\end{prop}}
\newcommand{\df}{\partial}
\newcommand{\Dt}{\Delta t}
\begin{document}



\begin{center}
{\bf
High Order Asymptotic Preserving DG-IMEX Schemes
for Discrete-Velocity
Kinetic Equations in a Diffusive Scaling
}
\end{center}
\vspace{.2in}

\centerline{
Juhi Jang \footnote{Department of Mathematics, University of California Riverside, Riverside, CA 92521. E-mail: juhijang@math.ucr.edu. Supported in part by NSF grants DMS-0908007 and DMS-1212142.}
Fengyan Li \footnote{Department of Mathematical Sciences, Rensselaer Polytechnic Institute, Troy, NY 12180. E-mail: lif@rpi.edu. Supported in part by NSF CAREER award DMS-0847241.}
Jing-Mei Qiu \footnote{Department of Mathematics, University of Houston,
Houston, 77204. E-mail: jingqiu@math.uh.edu.
The third and fourth authors are supported in part by Air Force Office of Scientific Computing YIP grant FA9550-12-0318, NSF grant DMS-0914852 and DMS-1217008.}
Tao Xiong \footnote{Department of
Mathematics, University of Houston, Houston, 77204. E-mail:
txiong@math.uh.edu}
}

\bigskip
\centerline{\bf Abstract}

\smallskip
In this paper, we develop a family of high order asymptotic preserving schemes for some discrete-velocity kinetic equations
under a diffusive scaling, that in the asymptotic limit lead to macroscopic models
such as the heat equation, the porous media equation, the advection-diffusion equation, and the viscous Burgers' equation. Our approach is based on the 
micro-macro reformulation of the kinetic equation which involves a natural decomposition of the equation to the equilibrium and non-equilibrium parts.
To achieve high order accuracy and uniform stability as well as to capture the correct asymptotic limit, two new ingredients are employed in the proposed methods: discontinuous Galerkin spatial discretization of arbitrary order of accuracy with suitable numerical fluxes;
 high order globally stiffly accurate implicit-explicit Runge-Kutta scheme in time equipped with a properly chosen implicit-explicit strategy. Formal asymptotic analysis shows that the proposed scheme in the limit of $\varepsilon \rightarrow 0$ is an explicit, consistent and high order
discretization for the limiting equation.
Numerical results are presented to demonstrate the stability and high order accuracy of the proposed schemes together with their performance in the limit.

\vfill

\noindent {\bf Keywords:} Kinetic equations; Asymptotic preserving; High order; DG-IMEX schemes
\newpage

\section{Introduction}
\label{sec1}
\setcounter{equation}{0}
\setcounter{figure}{0}
\setcounter{table}{0}

The evolution of particles in rarefied gas dynamics, neutron transport, radiative transfer or stellar dynamics can be modeled at different levels of scale. The microscopic particle dynamics is described by Newton's laws of motion, while the macroscopic hydrodynamics can model the
observables such as density, velocity or temperature. Kinetic theory concerns the statistical description of particles via the particle density distribution rather than individual particles. The number of particles is typically more than $10^{20}$ and the computation, for instance in nanotechnology, is very expensive. In such situations, kinetic theory can be used to capture important properties of microscopic phenomena with reasonable computational cost.  On the other hand, kinetic models often provide more detailed description than the 
macroscopic ones.

Kinetic theory is at the center of multi-scale modeling connecting the invisible microscopic models  with the macroscopic models. In particular, when the mean free path of particles is sufficiently small, the system is close to the equilibrium state and it can be shown that a macroscopic model is a good approximation to the kinetic equation. Building a passage from kinetic to macroscopic models is a very interesting problem and there has been a lot of mathematical progress over the decades \cite{bardos1991fluid,  saint2009hydrodynamic}. 

Due to the relatively small yet multiple scales of particle interactions, developing efficient and effective kinetic simulation tools is mathematically and numerically challenging. For instance, non-thermal chemical equilibrium flow often displays multiple scales, requiring a hierarchy of physical models that range from kinetic theory to continuum fluid dynamics. To address multi-scale phenomena, there are hybrid domain decomposition method \cite{bourgat1994coupling, degond2005smooth}, asymptotic preserving (AP) method \cite{jin2010asymptotic, pareschi2011efficient}, moment method \cite{levermore1996moment, coulombel2005diffusion, carrillo2008numerical} etc. A goal shared by many of these multi-scale approaches is to resolve the physics of interest, e.g. macroscopic quantities, while avoiding the computational cost as much as possible to resolve small scale structures. In particular, for kinetic equation with a broad range of Knudsen number $\varepsilon$  that characterizes the kinetic scale, AP methods are designed to be uniformly stable with respect to $\varepsilon$, while mimicking the asymptotic limit from the kinetic to the hydrodynamic models on the PDE level as $\varepsilon$ goes to 0. As a result, the scheme in the limit of $\varepsilon\rightarrow0$ becomes a consistent discretization of the limiting macro-scale equations. The methods have been actively pursued by many researchers in recent years since they can effectively deal with multi-scales and capture hydrodynamic macro-scale limits in a uniform setting. We refer readers to \cite{jin2010asymptotic, pareschi2011efficient} for a review of the subject.

The focus of this paper is to design high order AP schemes for some discrete-velocity kinetic equations \cite{jin1998diffusive} under a diffusive scaling. It was shown in \cite{jin1996numerical, naldi1998numerical} that improper treatment of spatial discretization even with a stable implicit time discretization may fail to capture the correct asymptotic limit, when the spatial and temporal mesh sizes do not resolve the $\varepsilon$-scale. In \cite{jin1996numerical},  by building in the correct asymptotic behavior, a scheme capturing the correct asymptotic limit with under-resolved mesh size was designed. In \cite{naldi1998numerical}, a proper splitting between the convection and stiff source terms was introduced; later the scheme was coupled with a second order Runge-Kutta splitting in \cite{jin1998diffusive, jin2000uniformly}. In \cite{klar1998asymptotic}, an AP scheme was designed based on a standard perturbation procedure, followed by a fractional step scheme with a semi-implicit procedure. Based on the micro-macro decomposition, a first order finite difference   AP method was formulated in \cite{lemou2010new} for staggered grids, and its stability and error estimates were established in \cite{liu2010analysis}. Some new splitting strategies were proposed in \cite{boscarino2011implicit, boscarino2013flux}, with the focus on the design of various high order globally stiffly accurate implicit-explicit (IMEX) schemes.

In this paper, we propose a family of high order schemes for the discrete-velocity kinetic equations as an initial effort in designing high order simulation tools for more general kinetic equations under different scalings. These schemes are defined for a reformulated equation set which is obtained from a micro-macro decomposition of the problem. Based on  this reformulation,  discontinuous Galerkin (DG) spatial discretization of arbitrary order of accuracy is applied with suitable numerical fluxes, and in time, we employ globally stiffly accurate high order IMEX Runge-Kutta (RK) methods \cite{boscarino2011implicit} equipped with a carefully chosen implicit-explicit strategy.
Formal asymptotic analysis shows that the proposed methods, as Knudsen number $\varepsilon$ goes to $0$,
become explicit and consistent  high order discretizations for the limiting macro-scale equations. These limiting schemes specifically involve (local) DG spatial discretizations and explicit RK methods in time. Numerical results further demonstrate the stability and high order accuracy of the proposed schemes when $\varepsilon$ is of order $1$ and in the limit of $\varepsilon$ goes to $0$.  Both linear and nonlinear problems are considered with some smooth and non-smooth solutions. In a companion paper \cite{JLQX_analysis}, some theoretical results will be established in terms of uniform stability of the schemes, as well as the high order accuracy for smooth solutions.

Micro-macro decomposition is originated by the theoretical PDE community (for instance, see \cite{liu2004boltzmann}) in trying to solve the collisional kinetic equations such as Boltzmann equation and it has been successfully used to extract the structure of dissipation and the interaction between the equilibrium and the non-equilibrium parts. We advocate the micro-macro decomposition framework as it provides a general guidance on how to  perform spatial discretization with proper inter-element treatments, and on how to discretize in time with a suitable implicit-explicit strategy for different terms to achieve desired uniform stability.  It is based on projection, and therefore provides a natural framework for projection-based discretizations such as DG methods. DG methods are a class of finite element methods which use discontinuous approximating functions, they have been designed for a wide range of equations and gain popularity in many areas of science and engineering \cite{cockburn1998local,hesthaven2008nodal,riviere2008DG}. DG methods provide a framework to systematically design schemes with arbitrary order of accuracy. Some other attractive properties include their flexibility with general meshes and local approximations hence suitability for h-p adaptivity, compactness, being highly parallelable, ease of handling various boundary conditions, and provable stability and error estimates for many linear and nonlinear problems  \cite{JiangShu1994,cockburn2001runge}.
For high-order differential equations such as the diffusion equation, local DG methods were developed based on rewriting the system into its first order form \cite{cockburn1998local, xu2010local}. For the stationary radiative transfer equations, an AP scheme based on an upwind DG discretization was analyzed in \cite{guermond2010asymptotic}. To discretize in time, we apply  globally stiffly accurate IMEX RK methods \cite{boscarino2011implicit} which rely on an implicit-explicit strategy different from that in \cite{lemou2010new}. IMEX schemes were developed, analyzed, and applied to hyperbolic system with relaxation and in the diffusive limit in \cite{pareschi2005implicit, boscarino2008error, boscarino2010class, boscarino2011implicit}. It was shown that high order temporal accuracy can be achieved for both $\varepsilon=O(1)$ and $\varepsilon<<1$. Our IMEX strategy and the globally stiffly accurate property of IMEX schemes ensure the correct asymptotic limit of numerical solutions from internal stages of RK methods and at each discrete time.

The paper is organized as follows. In Section \ref{sec2}, we introduce the discrete-velocity kinetic equation,  and provide examples we will consider as well as their  micro-macro decomposition and diffusive limits. In Section \ref{sec:DG-IMEX:3}, a family of (formally) high order schemes are proposed, they employ DG spatial discretization with suitable numerical fluxes and globally stiffly accurate IMEX RK temporal discretization. Formal asymptotic analysis is then performed when $\varepsilon\rightarrow 0$ for the proposed methods.
In Section \ref{sec:numerical}, numerical results are presented. High order accuracy is observed as designed with $\varepsilon$'s ranging from $10^{-6}$ to $O(1)$, together with the correct asymptotic behavior for $\varepsilon\rightarrow 0$. Finally, conclusions are made in Section \ref{sec6}. 
\section{Formulation}
\label{sec2}
\setcounter{equation}{0}
\setcounter{figure}{0}
\setcounter{table}{0}

We consider the following discrete-velocity kinetic model in a diffusive scaling
\begin{equation}\label{f-JH}
\varepsilon \partial_t f + v\cdot\nabla_x f = \frac1\varepsilon \mathcal{C}(f)
\end{equation}
with the initial data $f_0$ and suitable boundary conditions, where $f=f(x,v,t)$ is the distribution function of particles that depends on time $t>0$, position $x\in\Omega_x\subset {\mathbb R}$, and velocity $v\in \{1,-1\}$.
The parameter $\varepsilon>0$ measures the distance of the system to the equilibrium state and it can be regarded  as  the mean free path of the particles; when $\varepsilon$ is small, the system is close to equilibrium; when $\varepsilon$ is large, the system is far from equilibrium. $\mathcal{C}(f)$ is a collision operator that describes the interactions of particles among themselves and with the medium.

The following are interesting examples of $\mathcal{C}(f)$ that we will focus in this paper:
\begin{subequations}
\label{eq:Col}
\begin{align}
\label{Col_tele}  \mathcal{C}(f)&= \langle f\rangle -f ,\\
\label{Col_porous} \mathcal{C}(f)&=K\langle f \rangle^m \left( \langle f\rangle -f \right), \;\; K>0 \text{ and } m\leq 0, \\
\label{Col_adv_diff} \mathcal{C}(f)&= \langle f\rangle -f  +A\varepsilon v \langle f \rangle,\;\; |A\varepsilon|<1,  \\
\label{Col_bur}\mathcal{C}(f)&= \langle f\rangle -f  + C\varepsilon \left[ \langle f\rangle^2 -  ( \langle f\rangle -f )^2 \right] v,\;\; C>0.
\end{align}
\end{subequations}
Here $\langle f\rangle:=\int f d \mu$ where $d\mu$ is the discrete Lebesgue measure on $\{-1,1\}$:
\begin{equation*}\label{ave_f}
\langle f\rangle=\frac{f(x,v=1,t) + f(x,v=-1,t)}{2}.
\end{equation*}
We remark that the equation \eqref{f-JH} with \eqref{Col_tele}-\eqref{Col_bur} was extensively studied in \cite{jin1998diffusive}.
It was rewritten as the system of two equations for $u_1(x,t)=f(x,v=1,t)$ and $u_2(x,t)=f(x,v=-1,t)$. In particular, this new system with the collision operator \eqref{Col_tele} is known as the one-dimensional Goldstein-Taylor model or the telegraph equation.
In this paper, we will
design high order numerical schemes for \eqref{f-JH} with \eqref{Col_tele}-\eqref{Col_bur} based on its micro-macro decomposition.


\subsection{Micro-macro formulation}
\label{sec:2.2}

Note that $\langle 1\rangle =1$ and $\langle  v \rangle =0$ and we view 1 as the equilibrium state.
We consider the Hilbert space $L^2(d\mu)$ in $v$ variable with the following inner product:
$\langle f , g  \rangle := \int f g d\mu = \langle fg \rangle$ and introduce the orthogonal projection operator $\Pi$ onto $\text{Span} (1)$.
The micro-macro decomposition is performed in two steps:  first to write $f$ by using the orthogonal projections $\Pi$ and $\textbf{I}-\Pi$:
$f= \Pi f +(\textbf{I}-\Pi)f$
where $\Pi f$ is the macroscopic part (equilibrium part of $f$) and $(\textbf{I}-\Pi)f $ is the microscopic part (the non-equilibrium part of $f$) so that $\Pi f = \langle f\rangle \cdot 1$  and $\langle (\textbf{I}-\Pi)f, 1 \rangle=0$,
and then to decompose the kinetic equation via the projections in terms of $\Pi f$ and $(\textbf{I}-\Pi)f $.

In order to describe the micro-macro formulation for  \eqref{f-JH},
we introduce $ \rho:=  \langle f \rangle=\Pi f$ to denote the macroscopic density for $f$ and consider the following orthogonal decomposition,
\beq
\label{eq:f:mmd:LJ}
f = \langle f \rangle  + \varepsilon g = \rho+\varepsilon g
\eeq
where $\langle g\rangle = 0$.  Note that $\langle  \mathcal{C}(f)\rangle=0$ for \eqref{Col_tele}--\eqref{Col_bur}. Then the projection $\Pi$ of \eqref{f-JH}
gives rise to
$\varepsilon\partial_t \langle f\rangle + \partial_x\langle vf \rangle =0$. On the other hand,
$\langle g\rangle=0$ and $\langle v\rho\rangle= \rho\langle v\rangle=0$, hence we obtain
\[
\partial_t\rho+ \partial_x\langle vg \rangle  =0.
\]
Next, we apply $(\textbf{I}-\Pi)$
to \eqref{f-JH} and get
\[
\varepsilon^2 \partial_tg +v\partial_x\rho +\varepsilon (\mathbf{I}-\Pi)(v\partial_x g) = \frac{1}{\varepsilon}
(\mathbf{I}-\Pi) \mathcal{C}(f)= \frac{1}{\varepsilon}
 \mathcal{C}(\rho+\varepsilon g).
\]
Here  $\langle  \mathcal{C}(f)\rangle=0$ is used.  The micro-macro decomposition now yields the set of two equations
\begin{equation}
\begin{split}\label{eq:mmd:LJ}
&\partial_t\rho+\partial_x \langle vg \rangle  =0,\\
&\partial_t g  +\frac{1}{\varepsilon}  (\mathbf{I}-\Pi)(v\partial_xg) +\frac{1}{\varepsilon^2 }v\partial_x\rho = \frac{1}{\varepsilon^3 } \mathcal{C}(\rho+\varepsilon g).
\end{split}
\end{equation}
The solvability  for $f$ of $\eqref{f-JH}$ and the solvability for $\rho$ and $g$ of \eqref{eq:mmd:LJ} are equivalent.

For our two-velocity model, by writing
$j(x,t):=\frac{1}{2\varepsilon}(f(x,v=1,t)-f(x,v=-1,t))$,
the micro-macro decomposition \eqref{eq:mmd:LJ} can be put into the following form:
\beq
\label{eq:classical:LJ}
\begin{array}{l}
\partial_t\rho  + \partial_x j  = 0\,,\;\;\varepsilon^2 \partial_t j + \partial_x \rho  =  \mathcal{S}(\rho, j) 
\end{array}
\eeq
where $  \mathcal{S}(\rho, j)$ depends on $\mathcal{C}(f)$.

\subsection{{Diffusive limit and some energy identities }}

In this subsection, we will derive the equations that $\rho$ would satisfy in the limit of $\varepsilon\rightarrow 0$. {Energy identities will also be given for some cases.}

\bigskip
\noindent $\bullet$ {\underline{Heat and porous media equations.} }
We will treat \eqref{Col_tele} and \eqref{Col_porous}
together by regarding \eqref{Col_tele} as a special case of \eqref{Col_porous} with $m=0$ and $K=1$.
It is clear that
\beq\label{C:porus1}
 \mathcal{C}(f)=\mathcal{C}(\rho+\varepsilon g)=-K\varepsilon\rho^m g.
\eeq

To see the limiting equation for $\rho$,
we take the second equation in \eqref{eq:mmd:LJ}, write it as
$
g= - \frac{1}{K(1-m)}v\partial_x(\rho^{1-m}) +O(\varepsilon)$ 
and plug it into the first equation in \eqref{eq:mmd:LJ} to obtain
\[
 \partial_t\rho =
  \frac{1}{K(1-m)} \partial_{xx}\left(\rho^{1-m}\right)  + O(\varepsilon). 
\]
This will lead to a family of diffusion equations for $\rho$ as $\varepsilon\rightarrow 0$. If $m=0$, the limiting equation is the linear heat equation; if $m<0$,  it is the slow (nonlinear) diffusion equation, known as the porous media equation. 

With $j$ notation, we see that $\mathcal{S}(\rho,j)=-K\rho^mj$ and it is straightforward to check that the solutions to \eqref{eq:classical:LJ} in a periodic domain satisfy the following energy identity:
\[
\frac{1}{2}\frac{d}{dt} \int (\rho^2+\varepsilon^2 j^2) dx + \int K\rho^mj^2 dx=0
\]
which indicates the dissipation mechanism of the system.


\bigskip
\noindent
$\bullet$ {\underline{Advection-diffusion equation.} }From \eqref{Col_adv_diff},
we see that
\beq\label{C:Ad1}
 \mathcal{C}(f)=\mathcal{C}(\rho+\varepsilon g)=- \varepsilon (g-A v \rho).
 \eeq
By writing $g= Av\rho - v\partial_x\rho+O(\varepsilon )$ and plugging it into the first equation in \eqref{eq:mmd:LJ}, we recover an advection-diffusion equation for $\rho$ as $\varepsilon\rightarrow 0$:
\begin{equation}
\label{eq:ad:limit}
 \partial_t\rho +A\partial_x\rho=\partial_{xx}\rho.
\end{equation}

As in the previous case, some dissipation mechanism is expected, but we haven't found any specific results written in the literature. In the following, we present the $L^2$ stability result.

\begin{prop}
\label{prop:Ad_diff} Suppose $\rho$ and $j$ satisfy \eqref{eq:classical:LJ} with $\mathcal{S}(\rho,j)=-j+A\rho$ in a periodic domain. Then $\rho$ and $j$ obey the following energy identity:
\beq\label{Ad:energy}
\frac{1}{2}\frac{d}{dt}\int \left[ (1-\varepsilon^2A^2) \rho^2+\varepsilon^2(j-A\rho)^2 \right] dx +\int (j-A\rho)^2 dx=0.
\eeq
\end{prop}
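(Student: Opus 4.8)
The plan is to establish \eqref{Ad:energy} by the standard energy method: multiply each equation of \eqref{eq:classical:LJ} (with $\mathcal{S}(\rho,j) = -j + A\rho$) by a suitably chosen multiplier, integrate over the periodic domain, and sum. First I would simplify the quadratic form inside the time derivative in \eqref{Ad:energy}. Expanding $\varepsilon^2(j-A\rho)^2$ and combining with $(1-\varepsilon^2 A^2)\rho^2$ cancels the $\pm\varepsilon^2 A^2\rho^2$ contributions and leaves the cleaner integrand $\rho^2 + \varepsilon^2 j^2 - 2\varepsilon^2 A\rho j$. Differentiating this in time and collecting the coefficients of $\partial_t\rho$ and $\partial_t j$ shows that the energy rate equals $\int (\rho - \varepsilon^2 A j)\,\partial_t\rho\, dx + \int \varepsilon^2(j - A\rho)\,\partial_t j\, dx$, which identifies $\rho - \varepsilon^2 A j$ and $j - A\rho$ as the correct multipliers for the first and second equations of \eqref{eq:classical:LJ}, respectively.

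With the multipliers in hand, I would multiply $\partial_t\rho + \partial_x j = 0$ by $\rho - \varepsilon^2 A j$ and multiply $\varepsilon^2\partial_t j + \partial_x\rho = -j + A\rho$ by $j - A\rho$, integrate each over the periodic domain, and add. By the previous step, the time-derivative contributions reassemble exactly into $\frac{1}{2}\frac{d}{dt}\int[\rho^2 + \varepsilon^2 j^2 - 2\varepsilon^2 A\rho j]\,dx$, i.e. the left-hand time derivative in \eqref{Ad:energy}. The source term from the right-hand side of the second equation yields $\int (j-A\rho)(-j+A\rho)\,dx = -\int(j-A\rho)^2\,dx$, which is precisely the claimed dissipation term.

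What remains is to show that all spatial (flux) contributions cancel. Collecting them gives $\int(\rho - \varepsilon^2 A j)\,\partial_x j\,dx + \int(j - A\rho)\,\partial_x\rho\,dx$. On the periodic domain the pure-square pieces $\int j\,\partial_x j\,dx$ and $\int\rho\,\partial_x\rho\,dx$ vanish, being integrals of the exact derivatives $\tfrac{1}{2}\partial_x(j^2)$ and $\tfrac{1}{2}\partial_x(\rho^2)$, while the cross terms combine as $\int(\rho\,\partial_x j + j\,\partial_x\rho)\,dx = \int\partial_x(\rho j)\,dx = 0$. Hence the spatial terms contribute nothing, and assembling the three pieces produces \eqref{Ad:energy}.

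The only genuinely non-routine point is guessing the multipliers: the advection term $A\rho$ in $\mathcal{S}$ breaks the symmetry that makes the naive choice $(\rho, \varepsilon^2 j)$ work in the heat and porous media cases, so one must instead use the shifted combinations $\rho - \varepsilon^2 A j$ and $j - A\rho$. I expect this identification to be the main obstacle; once the multipliers are fixed, the integrations by parts and the cancellation of the flux terms are direct.
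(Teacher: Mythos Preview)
Your proof is correct, and the route differs slightly from the paper's. The paper multiplies the first equation by $\rho$ (not $\rho-\varepsilon^2 A j$) and the second by $j-A\rho$; with these simpler multipliers the time-derivative terms do not directly assemble into the desired quadratic, so the paper inserts an extra step: it rewrites $\int \varepsilon^2 \partial_t j\,(j-A\rho)\,dx$ by adding and subtracting $\varepsilon^2 A\,\partial_t\rho$ and then substitutes $\partial_t\rho=-\partial_x j$ from the first equation to kill the leftover cross term $\int \varepsilon^2 A\,\partial_t\rho\, j\,dx$. Your approach---first expanding the target energy to $\rho^2+\varepsilon^2 j^2-2\varepsilon^2 A\rho j$ and reading off the multipliers $(\rho-\varepsilon^2 A j,\; j-A\rho)$ directly---avoids that substitution and makes the time-derivative side fall out immediately, at the cost of a slightly less obvious first multiplier. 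Both arguments handle the spatial terms identically via periodicity, and the dissipation term is the same in each.
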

\begin{proof}
Since $j-A\rho$ rather than $j$ drives the system dissipate, we will try to derive the energy identity for $\rho$ and $j-A\rho$. Multiply the first equation in \eqref{eq:classical:LJ} by $\rho$ and the second equation by $j-A\rho$, and integrate over $x$ to get
\[
\begin{split}
\underbrace{\int  \partial_t\rho  \rho dx}_{=\frac{1}{2}\frac{d}{dt}\int \rho^2 dx } + \underbrace{\int  \varepsilon^2 \partial_t j (j-A\rho)dx }_{(a)}+ \underbrace{\int\partial_x j \rho dx
+\int \partial_x\rho  (j-A\rho)dx}_{(b)} = -\int (j-A\rho)^2 dx.
\end{split}
\]
Notice that
\[
\begin{split}
(a)&=\int \varepsilon^2 \partial_t (j-A\rho) (j-A\rho)dx+\int  \varepsilon^2 A\partial_t\rho  j dx-\int  \varepsilon^2 A\partial_t\rho  A\rho dx\\
&=\frac{1}{2}\frac{d}{dt}\int  \varepsilon^2\left[(j-A\rho)^2-(A\rho)^2\right ]dx -\underbrace{\int \varepsilon^2 A\partial_x j  j dx}_{=0}\,,\\
(b)&=\int \partial_x( j\rho-\frac{A\rho^2 }{2}) dx =0 \;\;\text{(integration by parts)}.
\end{split}
\]
Hence, we obtain \eqref{Ad:energy}.  \end{proof}

 We observe that as long as $|\varepsilon A|<1$, the positivity of the energy in \eqref{Ad:energy} is guaranteed. The energy identity \eqref{Ad:energy} gives rise to the $L^2$ energy law for the advection-diffusion equation $\partial_t\rho+A\partial_x\rho=\partial_{xx}\rho$ in the limit of $\varepsilon\rightarrow 0$.

\bigskip
\noindent
$\bullet$ {\underline{Viscous Burgers' equation.} } For \eqref{Col_bur},
we notice that
\beq\label{C:B1}
 \mathcal{C}(f)=\mathcal{C}(\rho+\varepsilon g)=- \varepsilon g+C\varepsilon \left[ \rho^2 -  \varepsilon^2 g ^2 \right] v.
\eeq
We write $g= Cv\rho^2 - v\partial_x\rho+O(\varepsilon )$ and plug it into the first equation in \eqref{eq:mmd:LJ} to get
\[
\partial_t \rho +C\partial_x(\rho^2)=\partial_{xx}\rho  + O(\varepsilon),
\]
which yields a viscous Burgers' equation for $\rho$ as $\varepsilon\rightarrow 0$.

The corresponding equations \eqref{eq:classical:LJ} can be obtained with $\mathcal{S}(\rho,j)= -j+ C[\rho^2-\varepsilon^2 j^2 ]$. This model is known as the nonlinear Ruijgrok-Wu model \cite{ruijgrok1982completely}. 
\section{DG-IMEX Methods}
\label{sec:DG-IMEX:3}

\setcounter{equation}{0}
\setcounter{figure}{0}
\setcounter{table}{0}

\newcommand{\eps}{\varepsilon}
\newcommand{\mD}{{\mathcal D}}
\newcommand{\Ox}{{\Omega_x}}
\newcommand{\Ov}{{\Omega_v}}
\newcommand{\xL}{{x_{i-\frac{1}{2}}}}
\newcommand{\xR}{{x_{i+\frac{1}{2}}}}
\newcommand{\iL}{{i-\frac{1}{2}}}
\newcommand{\iR}{{i+\frac{1}{2}}}
\newcommand{\testR}{{\phi}}   
\newcommand{\testG}{{\psi}}   
\newcommand{\mC}{{\mathcal{C}}}
\newcommand{\bI}{{\bf{I}}}

In this section, we will propose a family of high order
methods for the discrete-velocity kinetic equation \eqref{f-JH} based on its micro-macro 
reformulation \eqref{eq:mmd:LJ}.
The methods involve discontinuous Galerkin (DG) discretization of arbitrary order of accuracy in space and globally stiffly accurate implicit-explicit (IMEX) Runge-Kutta (RK) methods in time. 
Formal asymptotic analysis is performed to show that the proposed schemes in the limit of $\varepsilon\rightarrow0$ become explicit, consistent and high order schemes for the limiting equations.

\subsection{DG spatial discretizations}
\label{sec:3.1}

We first discretize the micro-macro system \eqref{eq:mmd:LJ} in space by DG methods. Since the discrete-velocity models considered here do not admit boundary layers, for brevity of presentation, we assume the boundary condition in $x$ is periodic and $\Omega_x=[x_\textrm{min},x_\textrm{max}]$. Other types  of boundary conditions can be easily treated.
Our spatial discretization is formulated on one mesh.
 Let's first introduce some notations. Start with $\{\xR\}_{i=0}^{i=N}$, a partition of $\Omega_x$. Here $x_{\frac12}=x_\textrm{min}$, $x_{N+\frac12}=x_\textrm{max}$, each element is denoted as $I_i=[\xL, \xR]$ with its length $\Delta x_i$, and $\Delta x=\max_i\Delta x_i$. Given any non-negative integer $k$, we define a finite dimensional discrete space
\begin{equation}
U_h^k=\left\{u\in L^2(\Omega_x): u|_{I_i}\in P^k(I_i), \forall i\right\}.
\label{eq:DiscreteSpace:1mesh}
\end{equation}
The local space $P^k(I)$ consists of polynomials of degree at most $k$ on $I$. Note functions in $U_h^k$ are piecewise-defined, and they are double-valued at grid points. For such functions, notations are introduced for jump and average: with $u(x^\pm)=\lim_{\Delta x\rightarrow 0^\pm} u(x+\Delta x)$, the jump and the average of $u$ at $x_\iR$ are defined as $[u]_\iR={u(x_\iR^+)-u(x_\iR^-)}$ and $\{u\}_\iR=\frac12(u(x_\iR^+)+u(x_\iR^-))$, respectively. We also use $u_\iR=u(x_\iR)$, $u^\pm_\iR=u(x^\pm_\iR), \forall i$.

We are now ready to define the semi-discrete DG method for the micro-macro system \eqref{eq:mmd:LJ}. Look for $\rho_h(\cdot,t), g_h(\cdot, v, t) \in U_h^k$, such that $\forall \testR, \testG \in U_h^k$, and $\forall i$,
\begin{subequations}
\label{eq:SDG}
\begin{align}
\int_{I_i} \df_t \rho_h \testR d x
&-  \int_{I_i} \langle vg_h\rangle \df_x\testR dx+ \widehat{\langle vg_h\rangle}_{\iR} {\testR^-_\iR} - \widehat{\langle vg_h\rangle}_{\iL} {\testR^+_\iL}=0, \label{eq:SDG:1}\\
\int_{I_i} \df_t g_h \testG d x
&+ \frac{1}{\eps}\int_{I_i} (\bI-\Pi) \mD_h(g_h; v)\; \testG dx
- \frac{1}{\eps^2} \left(\int_{I_i} v \rho_h \df_x\testG dx - v \widehat{\rho}_{h,\iR}{\testG^-_\iR} + v\widehat{\rho}_{h,\iL}{\testG^+_\iL} \right) \notag\\
&=\frac{1}{\eps^3}\int_{I_i} \mC (\rho_h+\eps g_h) \testG dx.   \label{eq:SDG:2}
\end{align}
\end{subequations}

In \eqref{eq:SDG:2}, $\mD_h(g_h; v)\in U_h^k$, and it is determined by an upwind discretization of $v \df_x g$ within the DG framework,
\begin{align}
(\mD_h(g_h; v), \testG)&=\sum_i\left(-\int_{I_i} vg_h \df_x\testG dx+\widetilde{(vg_h)}_\iR \testG_\iR^- - \widetilde{(vg_h)}_\iL\testG_\iL^+\right)\notag\\
&=-\sum_i\left(\int_{I_i} vg_h \df_x\testG dx\right) - \sum_i \widetilde{(vg_h)}_\iL[\testG]_\iL, \quad \psi\in U_h^k,
\label{eq:mD}
\end{align}
where $\widetilde{vg}$ is an upwind numerical flux consistent to $vg$,
\beq
\label{eq:vg:upwind:L-1}
\widetilde{vg}:=
\left\{
\begin{array}{ll}
v g^-,&\mbox{if}\; v>0\\
v g^+,&\mbox{if}\; v<0
\end{array}
\right.
=v\{g\}-\frac{|v|}{2}[g].
\eeq
Here and below, the standard inner product $(\cdot, \cdot)$ for the $L^2(\Omega_x)$ space is used, see the first term in  \eqref{eq:mD}.

Both $\widehat{\langle vg\rangle}$ and $\hat{\rho}$ in \eqref{eq:SDG} are also numerical fluxes, and they are consistent to the physical ones $\langle vg\rangle$ and $\rho$.  In this paper, we consider the following three choices:
%
\begin{subequations}
\label{eq:flux}
\begin{align}
\label{eq:flux:1}
{\textrm{alternating left-right:}} & \;\;\;\;  \widehat{\langle vg\rangle} = {\langle vg\rangle}^-, \hat{\rho} = {\rho}^+\;, \\
\label{eq:flux:11}
{\textrm{alternating right-left:}} & \;\;\;\; \widehat{\langle vg\rangle}={\langle vg\rangle}^+, \hat{\rho}={\rho}^-\;,\\
\label{eq:flux:2}
{\textrm{central:}} & \;\;\;\; \widehat{\langle vg\rangle}=\{\langle vg\rangle\}, \hat{\rho}=\{\rho\}\;.
\end{align}
\end{subequations}

Compared with central flux, alternating flux will result in a scheme with smaller dependant stencil. For different collision kernels $\mathcal{C}(f)$ in \eqref{eq:Col}, choices of fluxes may vary with the consideration of the stability and accuracy of the algorithm. More specifically,
\bit
\item[(1)] For the linear equation \eqref{eq:mmd:LJ} with \eqref{C:porus1} and $m=0$, one can use any numerical flux given in \eqref{eq:flux}.
\item[(2)]  For the nonlinear equation \eqref{eq:mmd:LJ} with \eqref{C:porus1} and $m\neq0$, as $\eps \rightarrow0$,  the limiting equation is the porous media equation for $\rho$. When $\rho>0$, the equation is diffusive. Similar to the linear case, we can choose any numerical flux given in \eqref{eq:flux}. However, the equation becomes degenerate when $\rho=0$, resulting in the phenomenon of finite speed propagation. Hence special attention has to be paid around the interface of $\rho=0$ when the alternating flux is used. In particular, when the interface is moving towards right, alternating left-right flux is used; when the interface is moving towards left, alternating right-left flux is used.
    More details are given in Section~\ref{sec:numerical}.
\item[(3)] For the convection-diffusion equations \eqref{eq:mmd:LJ} with \eqref{C:Ad1} or \eqref{C:B1},
     we choose the flux \eqref{eq:flux:1} for $A>0$ and for $C>0$ (resp. the flux \eqref{eq:flux:11} for $A<0$ and for $C<0$). Such choice ensures an upwind flux for the advective
     term in the limiting equation. When the convection is not dominating, central flux can also be used.
\eit

To get a more compact form of the scheme, one further sums up \eqref{eq:SDG} with respect to $i$,
\begin{subequations}
\label{eq:SDG:c}
\begin{align}
&(\df_t \rho_h, \testR)+a_h(g_h, \testR)=0, \label{eq:SDG:1:c}\\
&(\df_t g_h, \testG)+ \frac{1}{\eps} b_{h,v}(g_h, \testG)-\frac{v}{\eps^2} d_h(\rho_h, \testG)
=-\frac{1}{\eps^2}s^{(1)}_{h,v}(\rho_h, g_h, \testG)-s^{(2)}_{h,v}(g_h, \testG),   \label{eq:SDG:2:c}
\end{align}
\end{subequations}
where
\begin{subequations}
\begin{align}
\label{eq:ah}
a_h(g_h,\testR)&=-\sum_i \int_{I_i} \langle vg_h\rangle \df_x\testR dx - \sum_i \widehat{\langle vg_h\rangle}_{\iL} [\testR]_\iL,\\
\label{eq:bh}
b_{h,v}(g_h,\testG)&=((\bI-\Pi)\mD_h(g_h; v), \testG) =(\mD_h(g_h; v) - \langle\mD_h(g_h; v)\rangle, \testG),\\
\label{eq:dh}
d_h(\rho_h, \testG)&=\sum_i\int_{I_i} \rho_h \df_x\testG dx + \sum_i \widehat{\rho}_{h,\iL}[\testG]_\iL,
\end{align}
\end{subequations}
and
\begin{equation*}
s^{(1)}_{h,v}(\rho_h, g_h, \testG)=\left\{
\begin{array}{ll}
(K \rho_h^m g_h, \testG) &\;\;\;\; \textrm{for}\; \eqref{C:porus1}\\
(g_h-Av\rho_h, \testG) & \;\;\;\;\textrm{for}\; \eqref{C:Ad1}\\
(g_h-Cv\rho_h^2, \testG) &\;\;\;\; \textrm{for}\; \eqref{C:B1}
\end{array}
\right.,
s^{(2)}_{h,v}(g_h, \testG)=\left\{
\begin{array}{ll}
0 &\;\;\;\; \textrm{for}\; \eqref{C:porus1} \eqref{C:Ad1}\\
(Cvg_h^2, \testG)&\;\;\;\; \textrm{for}\; \eqref{C:B1}.
\end{array}
\right.
\end{equation*}
Note that the source term is written into a sum of two terms which are of different scales in $\eps$. These two terms will be treated differently in time discretization, see Section \ref{sec:3.2}.

\begin{rem} For the two-velocity models considered here, one can easily verify that $(\mathbf{I}-\Pi)(v\partial_xg)=v\langle \partial_x g\rangle$ holds. Recall the exact solution satisfies $\langle \partial_x g\rangle=\partial_x\langle g\rangle=0$,  hence $(\mathbf{I}-\Pi)(v\partial_xg)$ should vanish and it does not seem one needs to discretize $(\mathbf{I}-\Pi)(v\partial_xg)$ numerically. The analysis in the companion paper \cite{JLQX_analysis}, however, implies  that it is important to keep this term in order to obtain a more desirable stability condition on the time step, $\Dt=O(\eps h)$, in the convective regime with $\eps=O(1)$ instead of $\Dt=O(h^2)$ otherwise.
\end{rem}

\subsection{Fully discrete DG-IMEX methods}
\label{sec:3.2}

In this subsection, the semi-discrete DG method is further discretized in time with globally stiffly accurate IMEX RK schemes.
We start with  first order accuracy in time. Given $\rho_h^n(\cdot),\; g_h^n(\cdot, v)\in U_h^k$ that approximate the solution $\rho$ and $g$ at $t=t^n$, we look for $\rho_h^{n+1}(\cdot), \;g_h^{n+1}(\cdot,v) \in U_h^k$, such that $\forall \phi, \psi \in U_h^k$,
\begin{subequations}
\label{eq:FDG:1T}
\begin{align}
&\left(\frac{\rho_h^{n+1}-\rho_h^n}{\Dt}, \phi\right) + a_h(g_h^n, \phi)=0, \label{eq:FDG:1T:r}\\
&\left(\frac{g_h^{n+1}-g_h^n}{\Dt}, \psi\right)+ \frac{1}{\eps} b_{h,v}(g_h^n, \psi)-\frac{v}{\eps^2} d_h(\rho_h^{n+1}, \psi) =
 -\frac{1}{\eps^2}s^{(1)}_{h,v}(\rho_h^{n+1}, g_h^{n+1}, \psi)-s^{(2)}_{h,v}(g_h^{n}, \psi).
 \label{eq:FDG:1T:g}
\end{align}
\end{subequations}
We choose to use an implicit-explicit strategy different from that in \cite{lemou2010new}, as it is more natural to treat
both the collisional and convective stiff terms in the scale of $\frac{1}{\varepsilon^2}$  implicitly. This strategy will be used in higher order temporal discretizations discussed next.

To achieve higher order accuracy in time, we adopt the globally stiffly accurate IMEX RK schemes \cite{boscarino2011implicit}.
Recall an IMEX RK scheme can be represented with a double Butcher tableau
\beq
\label{eq: B_table}
\begin{array}{c|c}
\tilde{c} & \tilde{A}\\
\hline
 & {\tilde{b}}^T \end{array} \ \ \ \ \
\begin{array}{c|c}
{c} & {A}\\
\hline
 & {b^T} \end{array}~~,
\eeq
where $\tilde{A} = (\tilde{a}_{ij})$ is an $s\times s$  lower triangular matrix with zero diagonal entries for an explicit scheme, and $A = (a_{ij})$ is a lower triangular matrix with the same size but non-zero diagonal for a diagonally implicit RK (DIRK) method. The coefficients $\tilde{c}$ and $c$ are given by the usual relation $\tilde{c}_i = \sum_{j=1}^{i-1} \tilde a_{ij}$, $c_i = \sum_{j=1}^{i} a_{ij}$,
and vectors $\tilde{b} = (\tilde{b}_j)$ and $b = (b_j)$ provide the quadrature weights to combine internal stages of the RK method. The IMEX RK scheme is said to be {\em globally stiffly accurate} \cite{boscarino2011implicit} if
\begin{equation}
c_s = \tilde{c}_s = 1, \;\textrm{and}\;\;  A_{sj} = b_j, \tilde{A}_{sj} = \tilde{b}_j, \forall j=1, \cdots, s.
\label{eq:gsa}
\end{equation}
The first order IMEX scheme employed in \eqref{eq:FDG:1T}, represented by
\begin{displaymath}
\begin{array}{c|c c}
0 & 0&0\\
1 & 1 &0\\
\hline
 & 1 &0\\
 \end{array} \ \ \ \ \
\begin{array}{c|c c}
0 &0 & 0\\
1 &0&1\\
\hline
 &0&1\\
 \end{array}~~,
\end{displaymath}
is globally stiffly accurate.

Now we will apply a general globally stiffly accurate IMEX RK scheme, represented by \eqref{eq: B_table} with the property \eqref{eq:gsa}, to the semi-discrete DG scheme \eqref{eq:SDG:c}. This is combined with the same implicit-explicit strategy as in the first order case, namely, the terms $a_h$, $b_{h,v}$ and $s^{(2)}_{h, v}$ in \eqref{eq:SDG:c} are treated explicitly and the terms $d_h$, $s^{(1)}_{h, v}$ implicitly.
Given $\rho_h^n(\cdot),\; g_h^n(\cdot, v)\in U_h^k$, we look for $\rho_h^{n+1}(\cdot), \;g_h^{n+1}(\cdot,v) \in U_h^k$, such that $\forall \phi, \psi \in U_h^k$,
\begin{subequations}
\label{eq:FDG:GSA}
\begin{align}
\left(\rho_h^{n+1}, \phi\right)  &= \left(\rho_h^{n}, \phi\right) - \Delta t \sum_{l=1}^s \tilde{b}_l a_h(g_h^{(l)}, \phi), \label{eq:FDG:GSA:r}\\
\left(g_h^{n+1}, \psi\right) &= \left(g_h^{n}, \psi\right) - {\Delta t} \sum_{l=1}^{s}  \tilde{b}_l  \left(\frac{1}{\eps}b_{h,v}(g_h^{(l)}, \psi) + s^{(2)}_{h,v}(g_h^{(l)}, \psi)\right)\notag\\
& +\Delta t \sum_{l=1}^{s}  \frac{b_l}{\eps^2} \left (v d_h(\rho_h^{(l)}, \psi) - s^{(1)}_{h,v}(\rho_h^{(l)}, g_h^{(l)}, \psi) \right).
 \label{eq:FDG:GSA:g}
\end{align}
\end{subequations}
Here the approximations at the internal stages of an RK step, $\rho_h^{(l)}(\cdot), \;g_h^{(l)}(\cdot,v) \in U_h^k$ with  $l=1, \cdots, s$, satisfy
\begin{subequations}
\label{eq:FDG:GSA:stage}
\begin{align}
\left(\rho_h^{(l)}, \phi\right)  &= \left(\rho_h^{n}, \phi\right) - \Delta t \sum_{j=1}^{l-1} \tilde{a}_{lj} a_h(g_h^{(j)}, \phi), \label{eq:FDG:GSA:r:stage}\\
\left(g_h^{(l)}, \psi\right) &= \left(g_h^{n}, \psi\right) - {\Delta t} \sum_{j=1}^{l-1}  \tilde{a}_{lj}  \left(\frac{1}{\eps}b_{h,v}(g_h^{(j)}, \psi) + s^{(2)}_{h,v}(g_h^{(j)}, \psi)\right)\notag\\
& +\Delta t \sum_{j=1}^{l}  \frac{a_{lj}}{\eps^2} \left ( v d_h(\rho_h^{(j)}, \psi) - s_{h,v}^{(1)}(\rho_h^{(j)}, g_h^{(j)}, \psi) \right)
 \label{eq:FDG:GSA:g:stage}
\end{align}
\end{subequations}
for any $\phi, \psi \in U_h^k$.

The property of being globally stiffly accurate
guarantees that the updated numerical solution at $t^{n+1}$ is the same as the one from the last internal stage of one RK step. 
That is, one can equivalently replace equations in \eqref{eq:FDG:GSA} by
\begin{equation}
\label{eq:FDG:t}
\rho_h^{n+1} = \rho_h^{(s)}, \quad
g_h^{n+1} = g_h^{(s)},
\end{equation}
where $s$ is the total number of internal stages in the IMEX scheme. With the implicit treatment of stiff terms in \eqref{eq:FDG:GSA:stage}, in the limit of $\varepsilon\rightarrow 0$, equation \eqref{eq:FDG:GSA:g:stage} gives
\begin{equation}
\label{eq: qq1}
 v d_h(\rho_h^{(j)}, \psi) = s_{h,v}^{(1)}(\rho_h^{(j)}, g_h^{(j)}, \psi), \;\forall \psi\in U^k_h, \;\;\forall j=1,\cdots, s.
\end{equation}
The scheme being globally stiffly accurate further implies
\begin{equation}
\label{eq: qq2}
 v d_h(\rho_h^{n+1}, \psi) = s_{h,v}^{(1)}(\rho_h^{n+1}, g_h^{n+1}, \psi), \;\forall \psi\in U^k_h.
\end{equation}
Therefore the scheme projects the numerical solutions, both from internal stages and at discrete times, to the limiting  equilibrium when  $\varepsilon \rightarrow 0$ as in equation~\eqref{eq: qq1} and \eqref{eq: qq2} .


The second order globally stiffly accurate IMEX scheme used in this paper is the ARS(2, 2, 2) scheme \cite{ascher1997implicit} with a double Butcher Tableau
\begin{displaymath}
\begin{array}{c|c c c}
0 & 0&0&0\\
\gamma & \gamma&0&0\\
1 & \delta &1-\delta&0\\
\hline
& \delta &1-\delta&0\\
 \end{array} \ \ \ \ \
\begin{array}{c|c c c}
0 & 0&0&0\\
\gamma & 0&\gamma&0\\
1 &0 &1-\gamma&\gamma\\
\hline
&0 &1-\gamma&\gamma\\
 \end{array}
\end{displaymath}
where $\gamma = 1-\frac{1}{\sqrt{2}}$ and $\delta = 1-\frac{1}{2\gamma}$.
The third order one is
the ARS(4, 4, 3) scheme \cite{ascher1997implicit} with
\begin{displaymath}
\begin{array}{c|c c c c c}
0 & 0&0&0& 0 & 0\\
1/2 &1/2&0&0& 0 &0\\
2/3 &11/18&1/18&0& 0 &0\\
1/2 &5/6&-5/6&1/2& 0 &0\\
1 &1/4&7/4&3/4& -7/4 &0\\
\hline
&1/4&7/4&3/4& -7/4 &0\\
 \end{array} \ \ \ \ \
\begin{array}{c|c c c c c}
0 & 0&0&0& 0 & 0\\
1/2 &0&1/2&0&0 &0\\
2/3 &0&1/6&1/2& 0 &0\\
1/2 &0&-1/2&1/2& 1/2 &0\\
1 &0&3/2&-3/2& 1/2 &1/2\\
\hline
 &0&3/2&-3/2& 1/2 &1/2\\
 \end{array}
\end{displaymath}

\begin{rem}
The DG methods with the first order IMEX temporal discretization can be implemented by explicitly solving \eqref{eq:FDG:1T:r} for $\rho^{n+1}$, then solving for $g^{n+1}$ from a block-diagonal system defined by \eqref{eq:FDG:1T:g}. The higher order globally stiffly accurate IMEX scheme can be implemented in a stage-by-stage fashion similar to the first order case.
\end{rem}

\subsection{Formal asymptotic analysis}

In this subsection, we perform formal asymptotic analysis for the proposed DG-IMEX schemes as $\eps\rightarrow0$.

\begin{prop}
\label{prop: DG_IMEX:others}
Consider the DG-IMEX scheme \eqref{eq:FDG:GSA:stage}-\eqref{eq:FDG:t} for the micro-macro formulation \eqref{eq:mmd:LJ},
 with one choice of numerical fluxes in \eqref{eq:flux}, a globally stiffly accurate IMEX scheme \eqref{eq: B_table}, consistent initial condition, and periodic boundary condition. Then in the limit of $\varepsilon \rightarrow 0$,
\begin{itemize}
\item[({$\mathcal P$}1)]
the DG-IMEX scheme formally becomes: look for $\rho_h^{n+1}(\cdot), \;g_h^{n+1}(\cdot,v) \in U_h^k$, satisfying \eqref{eq:FDG:t}, while the solutions from the internal stages $\rho_h^{(l)}(\cdot), \;g_h^{(l)}(\cdot,v) \in U_h^k$ with  $l=1, \cdots, s$, satisfying
\begin{subequations}
\label{eq:FDG:GSA:stage:limit}
\begin{align}
\left(\rho_h^{(l)}, \phi\right)  &= \left(\rho_h^{n}, \phi\right) - \Delta t \sum_{j=1}^{l-1} \tilde{a}_{lj} a_h(g_h^{(j)}, \phi), \\
v d_h(\rho_h^{(l)}, \psi) &=
\left\{
\begin{array}{ll}
(K(\rho_h^{(l)})^m g_h^{(l)}, \testG) & \;\;\;\;\textrm{for}\; \eqref{C:porus1}\\
(g_h^{(l)}-Av\rho_h^{(l)}, \testG) & \;\;\;\;\textrm{for}\; \eqref{C:Ad1}\\
(g_h^{(l)}-Cv(\rho_h^{(l)})^2, \testG) &\;\;\;\; \textrm{for}\; \eqref{C:B1}
\end{array}\right.
\label{eq:FDG:GSA:g:stage:limit}
\end{align}
\end{subequations}
for any $\phi, \psi \in U_h^k$. This is a consistent scheme for the limiting equation with
the corresponding numerical flux in \eqref{eq:flux} and the explicit RK time discretization defined by $\tilde A$, $\tilde b$, $\tilde c$ in \eqref{eq: B_table}.

\item[({$\mathcal P$}2)]
If one further denotes $q=\langle v g\rangle$ (so to their approximations), then the limiting scheme, \eqref{eq:FDG:t} and \eqref{eq:FDG:GSA:stage:limit},  becomes an explicit and consistent local DG scheme for the heat and porous media equation, advection-diffusion equation, or viscous Burgers equation in its first order form as below,
\begin{equation}
\partial_t\rho+\partial_x q=0, \quad q=
\left\{
\begin{array}{ll}
-\frac{1}{K(1-m)}\partial_x (\rho^{1-m})~, & \;\;\;\;\textrm{for}\; \eqref{C:porus1}~,\\
A\rho-\partial_x \rho~, & \;\;\;\;\textrm{for}\; \eqref{C:Ad1}~,\\
C\rho^2-\partial_x \rho~, &\;\;\;\; \textrm{for}\;\eqref{C:B1}~.
\end{array}\right.\label{eq:rho:q}
\end{equation}

\end{itemize}
\end{prop}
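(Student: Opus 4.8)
The engine of the proof is the $\eps\to0$ limit of the stage equation \eqref{eq:FDG:GSA:g:stage} for $g_h^{(l)}$. The plan is to multiply that identity by $\eps^2$ and let $\eps\to0$, working formally under the assumption that all internal-stage quantities $\rho_h^{(j)}, g_h^{(j)}$ (and the forms evaluated on them) remain bounded uniformly in $\eps$. After scaling, the terms $\eps^2(g_h^{(l)},\psi)$, $\eps^2(g_h^n,\psi)$, the explicit contributions of size $\eps\,\tilde a_{lj}b_{h,v}(g_h^{(j)},\psi)$, and the $\eps^2\tilde a_{lj}s^{(2)}_{h,v}(g_h^{(j)},\psi)$ all vanish, leaving, for each $l$ and each $\psi\in U_h^k$,
\[
\sum_{j=1}^{l} a_{lj}\Big( v\,d_h(\rho_h^{(j)},\psi)-s^{(1)}_{h,v}(\rho_h^{(j)},g_h^{(j)},\psi)\Big)=0 .
\]
From this triangular system I would extract the stagewise equilibrium constraint \eqref{eq: qq1}.

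The extraction is a nested induction. Because $A=(a_{ij})$ is lower triangular with nonzero diagonal entries $a_{ll}$ (it is the DIRK part of the IMEX pair), I would forward-substitute: assuming $v\,d_h(\rho_h^{(j)},\psi)=s^{(1)}_{h,v}(\rho_h^{(j)},g_h^{(j)},\psi)$ for all $j<l$, the displayed identity collapses to $a_{ll}\big(v\,d_h(\rho_h^{(l)},\psi)-s^{(1)}_{h,v}(\rho_h^{(l)},g_h^{(l)},\psi)\big)=0$, and dividing by $a_{ll}\neq0$ gives the constraint at stage $l$. The base case is the delicate point: for the ARS schemes used here the first stage is explicit ($a_{11}=0$ with vanishing first row), so $\rho_h^{(1)}=\rho_h^n$ and $g_h^{(1)}=g_h^n$, and the $l=1$ constraint is exactly the equilibrium relation at $t^n$. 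I would therefore run a simultaneous induction over time levels $n$, anchored by the consistent initial condition at $n=0$; the globally stiffly accurate property \eqref{eq:gsa}, through $\rho_h^{n+1}=\rho_h^{(s)}$, $g_h^{n+1}=g_h^{(s)}$ in \eqref{eq:FDG:t}, carries the stage-$s$ constraint to $t^{n+1}$ and closes the loop, yielding \eqref{eq: qq2}. With \eqref{eq: qq1} in hand, the $\rho$ update \eqref{eq:FDG:GSA:r:stage} is untouched by the limit, so the scheme reduces to \eqref{eq:FDG:GSA:stage:limit}; consistency for ({$\mathcal P$}1) then follows by reading off $a_h\leftrightarrow\partial_x\langle vg\rangle$ and $d_h\leftrightarrow-\partial_x\rho$ from \eqref{eq:ah} and \eqref{eq:dh}, together with $(\tilde A,\tilde b,\tilde c)$ being a consistent explicit RK method.

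For ({$\mathcal P$}2) I would introduce $q_h=\langle vg_h\rangle$ and first note, from \eqref{eq:ah}, that $a_h(g_h,\phi)$ depends on $g_h$ only through $\langle vg_h\rangle$, so the $\rho$ update is already the DG weak form of $\partial_t\rho+\partial_x q=0$. To obtain the auxiliary relation I would apply $\langle v\,\cdot\,\rangle$ to the stage constraint \eqref{eq:FDG:GSA:g:stage:limit}, i.e.\ combine its $v=1$ and $v=-1$ copies with weights $\tfrac12 v$, and use $\langle v^2\rangle=1$, $\langle v\rangle=0$. Since $d_h(\rho_h,\psi)$ is $v$-independent, the left side becomes $d_h(\rho_h^{(l)},\psi)$, while the right side collapses $g_h$ into $q_h$, giving $d_h(\rho_h^{(l)},\psi)=(K(\rho_h^{(l)})^m q_h^{(l)},\psi)$ for \eqref{C:porus1}, $d_h(\rho_h^{(l)},\psi)=(q_h^{(l)}-A\rho_h^{(l)},\psi)$ for \eqref{C:Ad1}, and $d_h(\rho_h^{(l)},\psi)=(q_h^{(l)}-C(\rho_h^{(l)})^2,\psi)$ for \eqref{C:B1}. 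Reading $d_h\leftrightarrow-\partial_x\rho$, these are precisely the LDG auxiliary equations for $q=-\tfrac1{K(1-m)}\partial_x(\rho^{1-m})$, $q=A\rho-\partial_x\rho$, and $q=C\rho^2-\partial_x\rho$; paired with the $\rho$ update, and with the fluxes \eqref{eq:flux} serving as the LDG numerical fluxes, this is the claimed explicit, consistent local DG scheme for the first-order system \eqref{eq:rho:q}.

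I expect the main obstacle to be the base case of the stage induction and its coupling to the time stepping. Because the first RK stage is explicit, the stagewise induction cannot start on its own and must be supplied the equilibrium at $t^n$, so the argument closes only through the joint time/stage induction in which the globally stiffly accurate condition \eqref{eq:gsa} is essential. A secondary point, inherent to a formal analysis, is that passing to the $\eps\to0$ limit presupposes uniform-in-$\eps$ boundedness of the internal stages; a rigorous justification (uniform stability) lies outside this proposition and is deferred to the companion analysis \cite{JLQX_analysis}.
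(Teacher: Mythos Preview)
Your proposal is correct and follows essentially the same route as the paper: take the formal $\eps\to0$ limit of \eqref{eq:FDG:GSA:g:stage} to obtain the stagewise equilibrium constraint \eqref{eq: qq1}, invoke the globally stiffly accurate property for \eqref{eq: qq2}, and then multiply by $v$ and apply $\langle\cdot\rangle$ to pass to the $q$-formulation for ({$\mathcal P$}2). Your treatment is in fact more careful than the paper's---you make explicit the forward substitution through the lower-triangular DIRK matrix and the joint time/stage induction needed to seed the base case from the consistent initial data, points the paper leaves implicit under the word ``formally.''
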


\begin{proof}
 ({$\mathcal P$}1) can be obtained straightforwardly with the definition of $s_{h,v}^{(1)}(\rho_h, g_h, \psi)$ and by formally taking $\eps\rightarrow 0$ in \eqref{eq:FDG:GSA:stage}-\eqref{eq:FDG:t}. Note the time discretization comes directly from the explicit part of \eqref{eq: B_table}, and the numerical flux is carried over as well. To see that the limiting scheme gives a consistent discretization for the limiting equation, one only needs to recall from Section \ref{sec:2.2} that the limiting equation, when written in the form of $\rho$ and $g$, is given by
\begin{equation}
\partial_t\rho+\partial_x \langle v g\rangle=0,\quad g=
\left\{
\begin{array}{ll}
-\frac{1}{K(1-m)}v \partial_x (\rho^{1-m})~, & \;\;\;\;\textrm{for}\; \eqref{C:porus1}~,\\
Av\rho-v\partial_x \rho~, & \;\;\;\;\textrm{for}\; \eqref{C:Ad1}~,\\
Cv\rho^2-v\partial_x \rho~, &\;\;\;\; \textrm{for}\; \eqref{C:B1}~.
\end{array}\right.
\end{equation}

Next we multiply equation \eqref{eq:FDG:GSA:g:stage:limit} with $v$, apply the $\langle \cdot \rangle$ operator,
and denote $q=\langle v g\rangle$ (so to their approximations), then the limiting scheme becomes: look for
$\rho_h^{(l)}, q_h^{(l)} \in U_h^k$ with $l=1,\cdots, s$, such that
\begin{subequations}
\begin{align}
\left(\rho_h^{(l)}, \phi\right)  &= \left(\rho_h^{n}, \phi\right) - \Delta t \sum_{j=1}^{l-1} \tilde{a}_{lj} r_h(q_h^{(j)}, \phi), \quad \forall \phi \in U_h^k,\\
d_h(\rho_h^{(l)}, \psi) &=
\left\{
\begin{array}{ll}
(K(\rho_h^{(l)})^m q_h^{(l)}, \testG) & \;\;\;\;\textrm{for}\; \eqref{C:porus1}\\
(q_h^{(l)}-A\rho_h^{(l)}, \testG) & \;\;\;\;\textrm{for}\; \eqref{C:Ad1}\\
(q_h^{(l)}-C(\rho_h^{(l)})^2, \testG) &\;\;\;\; \textrm{for}\; \eqref{C:B1}
\end{array}\right.
\quad \forall \psi \in U_h^k,
\end{align}
\label{eq:LDG}
\end{subequations}
and $\rho_h^{n+1}, q_h^{n+1} \in U_h^k$  satisfying
\begin{equation}
\label{eq:LDG:3}
\rho_h^{n+1}=\rho_h^{(s)}, \quad q_h^{n+1}=q_h^{(s)}.
\end{equation}
Here $d_h$ is given in \eqref{eq:dh} and
\beq
\label{eq:rh}
r_h(q_h, \phi) = -\sum_i \int_{I_i} q_h \df_x\testR dx - \sum_i \hat{q}_{h,\iL} [\testR]_\iL,
\eeq
with $\hat{q}$ defined in the same fashion as $\langle v g\rangle$ (see \eqref{eq:flux}). This exactly gives a consistent local DG method for \eqref{eq:rho:q}.
 \end{proof}

\begin{rem}
(1) For the telegraph equation, the limit of the proposed scheme reproduces a local DG method studied in \cite{cockburn1998local} (with the time discretization as the explicit RK method defined by $\tilde A, \tilde b, \tilde c$)
for linear heat equation. It can be proved (see \cite{cockburn1998local} for part of the analysis) that the semi-discrete local DG scheme for the heat equation with any of the alternating fluxes achieves an optimal $(k+1)^{th}$ order of accuracy,
and the scheme with the central flux achieves a sub-optimal $k^{th}$ order for odd $k$ and an optimal $(k+1)^{th}$ order for even $k$, when piecewise polynomials of degree $k$ are used as approximations. This is consistent with our numerical observations in Section~\ref{sec:numerical}.

(2) If the numerical flux in \eqref{eq:flux} is chosen based on the sign of $A$ and $C$ as discussed in section \ref{sec:3.1}, then the convective term in the limiting equation will be discretized in an upwind fashion.

(3) In general, more rigorous analysis would be needed to show whether the limiting schemes, which are consistent with formal high order accuracy, are indeed stable and with good accuracy.
\end{rem}
 \section{Numerical Examples}
\label{sec:numerical}
\setcounter{equation}{0}
\setcounter{figure}{0}
\setcounter{table}{0}

In this section, we will demonstrate the performance of the proposed schemes by applying them to several numerical examples. Two kinds of fluxes in (\ref{eq:flux}) will be used: alternating and central.
For alternating fluxes, without specifying, we would use (\ref{eq:flux:1}) which is referred to as  the ``left-right flux''. Likewise, \eqref{eq:flux:11} will be called the ``right-left flux''.  Our scheme is denoted as
``DG(k+1)-IMEX(k+1)'' if piecewise polynomials of degree at most $k$ are used in the discrete space \eqref{eq:DiscreteSpace:1mesh} together with the $(k+1)^{th}$ order IMEX scheme in time, for $k=0, 1, 2$.  The simulation is carried out up to the final time $T$ on uniform meshes with totally $N$ elements. The time step $\Delta t$ is determined by
\begin{equation*}
\Delta t = C_{hyper} \varepsilon \Delta x + C_{diff} \Delta x^2.
\end{equation*}
That is, $\Delta t=O(\varepsilon \Delta x)$ in the rarefied (convective) regime where $\varepsilon=O(1)$,
and $\Delta t=O(\Delta x^2)$ in the parabolic (diffusive) regime where $\varepsilon<<1$. We take $C_{hyper}=0.5$ and $C_{diff}=0.25$ for DG1-IMEX1, and this is based on a uniform stability analysis of the method for the telegraph equation \cite{JLQX_analysis}.  As for DG(k+1)-IMEX(k+1) with $k>0$,  $C_{hyper}$ and $C_{diff}$ are chosen based on the numerical simulations with $\varepsilon=O(1), 10^{-2}, 10^{-6}$. In particular, we take $C_{hyper}=0.5$ and $C_{diff}=0.01$ for DG2-IMEX2, and $C_{hyper}=0.25$ and $C_{diff}=0.006$ for DG3-IMEX3.


\subsection{Telegraph equation}
\label{sec:4.1}

Consider (\ref{C:porus1}) with $m=0$ and $K=1$.
We test the accuracy for our scheme with the following exact solution
\begin{eqnarray}
\begin{cases}
\rho(x,t)=\frac{1}{r}\exp(r t)\sin(x),\quad r=\frac{-2}{1+\sqrt{1-4\varepsilon^2}}, \\
j(x,t)=\exp(r t)\cos(x)
\end{cases}
\label{tele1}
\end{eqnarray}
on the domain $[-\pi, \pi]$ with periodic boundary conditions.
In Tables \ref{tab11}-\ref{tab13}, we show the errors and orders of accuracy with $\varepsilon=0.5, 10^{-2}, 10^{-6}$ for  DG(k+1)-IMEX(k+1), $k=0, 1, 2$, respectively. Alternating left-right flux is used and $T=1$.
 For all three $\varepsilon$s,
a uniform $(k+1)^{th}$ order of convergence is observed for DG(k+1)-IMEX(k+1).  We further present in Tables \ref{tab14}-\ref{tab16} the results from DG(k+1)-IMEX(k+1) using the central flux. For all three $\eps$s,
 a uniform $(k+1)^{th}$ order for even $k$ and $k^{th}$ order for odd $k$ is observed.


\begin{table}
\centering
\caption{$L^1$ errors and orders of $\rho$ and $j$ for the telegraph equation with the exact solution (\ref{tele1}), $T=1.0$, DG1-IMEX1 with left-right flux.}
\vspace{0.2cm}
  \begin{tabular}{|c|c|c|c|c|c|}
    \hline
    &N  &  $L^1$ error of $\rho$ & order   & $L^1$ error of $j$  & order \\\hline
\multirow{5}{*}{$\varepsilon=0.5$}&    10 &     6.04E-02 &       --&     7.46E-02 &       --  \\  \cline{2-6}
    &20 &     2.19E-02 &     1.46&     3.38E-02 &     1.14  \\  \cline{2-6}
    &40 &     9.20E-03 &     1.25&     1.60E-02 &     1.08  \\  \cline{2-6}
    &80 &     4.19E-03 &     1.14&     7.81E-03 &     1.03  \\  \cline{2-6}
   &160 &     2.00E-03 &     1.07&     3.86E-03 &     1.02  \\  \hline
\multirow{5}{*}{$\varepsilon=10^{-2}$}&    10 &     3.79E-02 &       --&     8.05E-02 &       --  \\  \cline{2-6}
   & 20 &     1.78E-02 &     1.09&     3.77E-02 &     1.09  \\  \cline{2-6}
   & 40 &     8.79E-03 &     1.02&     1.85E-02 &     1.03  \\  \cline{2-6}
   & 80 &     4.36E-03 &     1.01&     9.22E-03 &     1.01  \\  \cline{2-6}
   &160 &     2.17E-03 &     1.01&     4.60E-03 &     1.00  \\  \hline
\multirow{5}{*}{$\varepsilon=10^{-6}$}&    10 &     3.79E-02 &       --&     8.03E-02 &       --  \\  \cline{2-6}
   & 20 &     1.79E-02 &     1.08&     3.76E-02 &     1.09  \\  \cline{2-6}
   & 40 &     8.82E-03 &     1.02&     1.85E-02 &     1.02  \\  \cline{2-6}
   & 80 &     4.38E-03 &     1.01&     9.21E-03 &     1.01  \\  \cline{2-6}
   &160 &     2.18E-03 &     1.01&     4.60E-03 &     1.00  \\  \hline
  \end{tabular}
\label{tab11}
\end{table}

\begin{table}
\centering
\caption{$L^1$ errors and orders of $\rho$ and $j$ for the telegraph equation with the exact solution (\ref{tele1}), $T=1.0$, DG2-IMEX2 with left-right flux.}
\vspace{0.2cm}
  \begin{tabular}{|c|c|c|c|c|c|}
    \hline
    &N  &  $L^1$ error of $\rho$ & order   & $L^1$ error of $j$  & order \\\hline
\multirow{5}{*}{$\varepsilon=0.5$}&    10 &     1.35E-03 &       --&     2.36E-03 &       --  \\  \cline{2-6}
    &20 &     3.00E-04 &     2.17&     4.90E-04 &     2.27  \\  \cline{2-6}
    &40 &     7.23E-05 &     2.05&     1.14E-04 &     2.10  \\  \cline{2-6}
    &80 &     1.79E-05 &     2.01&     2.76E-05 &     2.04  \\  \cline{2-6}
   &160 &     4.46E-06 &     2.01&     6.82E-06 &     2.02  \\  \hline
\multirow{5}{*}{$\varepsilon=10^{-2}$}&    10 &     4.83E-03 &       --&     4.94E-03 &       --  \\  \cline{2-6}
    &20 &     1.19E-03 &     2.02&     1.19E-03 &     2.06  \\  \cline{2-6}
    &40 &     2.96E-04 &     2.01&     2.97E-04 &     2.00  \\  \cline{2-6}
    &80 &     7.40E-05 &     2.00&     7.40E-05 &     2.00  \\  \cline{2-6}
   &160 &     1.85E-05 &     2.00&     1.85E-05 &     2.00  \\  \hline
\multirow{5}{*}{$\varepsilon=10^{-6}$}&    10 &     4.82E-03 &       --&     4.93E-03 &       --  \\  \cline{2-6}
    &20 &     1.19E-03 &     2.02&     1.18E-03 &     2.06  \\  \cline{2-6}
    &40 &     2.96E-04 &     2.00&     2.96E-04 &     2.00  \\  \cline{2-6}
    &80 &     7.40E-05 &     2.00&     7.40E-05 &     2.00  \\  \cline{2-6}
   &160 &     1.85E-05 &     2.00&     1.85E-05 &     2.00  \\  \hline
  \end{tabular}
\label{tab12}
\end{table}

\begin{table}
\centering
\caption{$L^1$ errors and orders of $\rho$ and $j$ for the telegraph equation with the exact solution (\ref{tele1}), $T=1.0$, DG3-IMEX3 with left-right flux.}
\vspace{0.2cm}
  \begin{tabular}{|c|c|c|c|c|c|}
    \hline
    &N  &  $L^1$ error of $\rho$ & order   & $L^1$ error of $j$  & order \\\hline
\multirow{5}{*}{$\varepsilon=0.5$}&    10 &     6.33E-05 &       --&     9.48E-05 &       --  \\  \cline{2-6}
    &20 &     7.54E-06 &     3.07&     1.15E-05 &     3.04  \\  \cline{2-6}
    &40 &     9.31E-07 &     3.02&     1.44E-06 &     3.00  \\  \cline{2-6}
    &80 &     1.16E-07 &     3.01&     1.80E-07 &     3.00  \\  \cline{2-6}
   &160 &     1.44E-08 &     3.00&     2.24E-08 &     3.00  \\  \hline
\multirow{5}{*}{$\varepsilon=10^{-2}$}&    10 &     2.53E-04 &       --&     2.46E-04 &       --  \\  \cline{2-6}
    &20 &     3.11E-05 &     3.03&     3.11E-05 &     2.98  \\  \cline{2-6}
    &40 &     3.89E-06 &     3.00&     3.89E-06 &     3.00  \\  \cline{2-6}
    &80 &     4.87E-07 &     3.00&     4.87E-07 &     3.00  \\  \cline{2-6}
   &160 &     6.09E-08 &     3.00&     6.09E-08 &     3.00  \\  \hline
\multirow{5}{*}{$\varepsilon=10^{-6}$}&    10 &     2.53E-04 &       --&     2.46E-04 &       --  \\  \cline{2-6}
    &20 &     3.11E-05 &     3.03&     3.11E-05 &     2.98  \\  \cline{2-6}
    &40 &     3.89E-06 &     3.00&     3.89E-06 &     3.00  \\  \cline{2-6}
    &80 &     4.87E-07 &     3.00&     4.87E-07 &     3.00  \\  \cline{2-6}
   &160 &     6.09E-08 &     3.00&     6.09E-08 &     3.00  \\  \hline
  \end{tabular}
\label{tab13}
\end{table}


\begin{table}
\centering
\caption{$L^1$ errors and orders of $\rho$ and $j$ for the telegraph equation with the exact solution (\ref{tele1}), $T=1.0$, DG1-IMEX1 with central flux.}
\vspace{0.2cm}
  \begin{tabular}{|c|c|c|c|c|c|}
    \hline
    &N  &  $L^1$ error of $\rho$ & order   & $L^1$ error of $j$  & order \\\hline
\multirow{5}{*}{$\varepsilon=0.5$}&    10 &     2.49E-02 &       --&     3.80E-02 &       --  \\  \cline{2-6}
   & 20 &     9.80E-03 &     1.34&     1.74E-02 &     1.13  \\  \cline{2-6}
   & 40 &     4.42E-03 &     1.15&     8.17E-03 &     1.09  \\  \cline{2-6}
   & 80 &     2.07E-03 &     1.10&     3.99E-03 &     1.03  \\  \cline{2-6}
   &160 &     1.00E-03 &     1.04&     1.97E-03 &     1.02  \\  \hline
\multirow{5}{*}{$\varepsilon=10^{-2}$}&    10 &     4.14E-02 &       --&     3.46E-02 &       --  \\  \cline{2-6}
   & 20 &     1.89E-02 &     1.13&     1.74E-02 &     0.99  \\  \cline{2-6}
   & 40 &     9.04E-03 &     1.07&     8.71E-03 &     1.00  \\  \cline{2-6}
   & 80 &     4.43E-03 &     1.03&     4.36E-03 &     1.00  \\  \cline{2-6}
   &160 &     2.20E-03 &     1.01&     2.18E-03 &     1.00  \\  \hline
\multirow{5}{*}{$\varepsilon=10^{-6}$}&    10 &     4.12E-02 &       --&     3.46E-02 &       --  \\  \cline{2-6}
   & 20 &     1.89E-02 &     1.13&     1.74E-02 &     0.99  \\  \cline{2-6}
   & 40 &     9.01E-03 &     1.07&     8.69E-03 &     1.00  \\  \cline{2-6}
   & 80 &     4.42E-03 &     1.03&     4.34E-03 &     1.00  \\  \cline{2-6}
   &160 &     2.19E-03 &     1.01&     2.17E-03 &     1.00  \\  \hline
  \end{tabular}
\label{tab14}
\end{table}

\begin{table}
\centering
\caption{$L^1$ errors and orders of $\rho$ and $j$ for the telegraph equation with the exact solution (\ref{tele1}), $T=1.0$, DG2-IMEX2 with central flux.}
\vspace{0.2cm}
  \begin{tabular}{|c|c|c|c|c|c|}
    \hline
    &N  &  $L^1$ error of $\rho$ & order   & $L^1$ error of $j$  & order \\\hline
\multirow{5}{*}{$\varepsilon=0.5$}&    10 &     1.31E-02 &       --&     8.93E-03 &       --  \\  \cline{2-6}
   & 20 &     1.06E-02 &     0.30&     3.64E-03 &     1.29  \\  \cline{2-6}
   & 40 &     7.20E-03 &     0.56&     1.25E-03 &     1.54  \\  \cline{2-6}
   & 80 &     4.26E-03 &     0.76&     3.74E-04 &     1.74  \\  \cline{2-6}
   &160 &     2.33E-03 &     0.87&     1.03E-04 &     1.86  \\  \hline
\multirow{5}{*}{$\varepsilon=10^{-2}$}&    10 &     1.02E-02 &       --&     1.01E-02 &       --  \\  \cline{2-6}
   & 20 &     4.51E-03 &     1.18&     4.78E-03 &     1.09  \\  \cline{2-6}
   & 40 &     1.95E-03 &     1.21&     2.37E-03 &     1.01  \\  \cline{2-6}
   & 80 &     7.58E-04 &     1.37&     1.24E-03 &     0.94  \\  \cline{2-6}
   &160 &     1.53E-04 &     2.31&     6.90E-04 &     0.85  \\  \hline
\multirow{5}{*}{$\varepsilon=10^{-6}$}&    10 &     1.06E-02 &       --&     1.00E-02 &       --  \\  \cline{2-6}
   & 20 &     4.83E-03 &     1.13&     4.66E-03 &     1.10  \\  \cline{2-6}
   & 40 &     2.29E-03 &     1.08&     2.25E-03 &     1.05  \\  \cline{2-6}
   & 80 &     1.11E-03 &     1.04&     1.11E-03 &     1.03  \\  \cline{2-6}
   &160 &     5.50E-04 &     1.02&     5.48E-04 &     1.01  \\  \hline
  \end{tabular}
\label{tab15}
\end{table}

\begin{table}
\centering
\caption{$L^1$ errors and orders of $\rho$ and $j$ for the telegraph equation with the exact solution (\ref{tele1}), $T=1.0$, DG3-IMEX3 with central flux.}
\vspace{0.2cm}
  \begin{tabular}{|c|c|c|c|c|c|}
    \hline
    &N  &  $L^1$ error of $\rho$ & order   & $L^1$ error of $j$  & order \\\hline
\multirow{5}{*}{$\varepsilon=0.5$}&    10 &     6.56E-05 &       --&     7.13E-05 &       --  \\  \cline{2-6}
   & 20 &     7.10E-06 &     3.21&     7.76E-06 &     3.20  \\  \cline{2-6}
   & 40 &     8.14E-07 &     3.13&     9.81E-07 &     2.98  \\  \cline{2-6}
   & 80 &     9.49E-08 &     3.10&     1.11E-07 &     3.14  \\  \cline{2-6}
   &160 &     1.24E-08 &     2.93&     1.51E-08 &     2.88  \\  \hline
\multirow{5}{*}{$\varepsilon=10^{-2}$}&    10 &     1.77E-04 &       --&     1.76E-04 &       --  \\  \cline{2-6}
   & 20 &     2.05E-05 &     3.11&     2.04E-05 &     3.11  \\  \cline{2-6}
   & 40 &     2.49E-06 &     3.04&     2.48E-06 &     3.04  \\  \cline{2-6}
   & 80 &     3.07E-07 &     3.02&     3.05E-07 &     3.02  \\  \cline{2-6}
   &160 &     3.81E-08 &     3.01&     3.79E-08 &     3.01  \\  \hline
\multirow{5}{*}{$\varepsilon=10^{-6}$}&    10 &     1.76E-04 &       --&     1.76E-04 &       --  \\  \cline{2-6}
   & 20 &     2.04E-05 &     3.11&     2.04E-05 &     3.11  \\  \cline{2-6}
   & 40 &     2.48E-06 &     3.04&     2.48E-06 &     3.04  \\  \cline{2-6}
   & 80 &     3.05E-07 &     3.02&     3.05E-07 &     3.02  \\  \cline{2-6}
   &160 &     3.79E-08 &     3.01&     3.79E-08 &     3.01  \\  \hline
  \end{tabular}
\label{tab16}
\end{table}

Next we consider a Riemann problem for the telegraph equation, with initial conditions
\begin{eqnarray}
 \begin{cases}
 \rho_L=2.0, \quad j_L=0.0, \qquad -1<x<0, \\
 \rho_R=1.0, \quad j_R=0.0, \qquad  0<x<1,
 \end{cases}
 \label{tele2}
\end{eqnarray}
and inflow and outflow boundary conditions on the computational domain $[-1, 1]$. We depict in Figure \ref{fig1} the numerical solutions of $\rho$ and $j$ from DG(k+1)-IMEX(k+1), $k=0, 1, 2$ in the rarefied regime ($\varepsilon=0.7$) at $T=0.25$ and in the parabolic regime ($\varepsilon=10^{-6}$) at $T=0.04$. Alternating left-right flux and central flux are used. The mesh size is $\Delta x= 0.05$. The reference solutions are obtained using DG3-IMEX3 with the corresponding numerical flux on a more refined grid of $\Delta x=0.004$. One can see that the higher order scheme (DG3-IMEX3) has much better resolution than the lower order one (DG1-IMEX1). Compared with the results in \cite{jin1998diffusive}, the high order results can also catch the shock speed well for such a  coarse mesh. Small oscillations are observed for our scheme with any of the numerical fluxes in the rarefied regime, even for DG1-IMEX1, due to the dispersive error of the scheme. DG3-IMEX3 demonstrates better control over the numerical oscillations than DG2-IMEX2.

\begin{figure}[ht]
\centering
\includegraphics[totalheight=2.0in]{./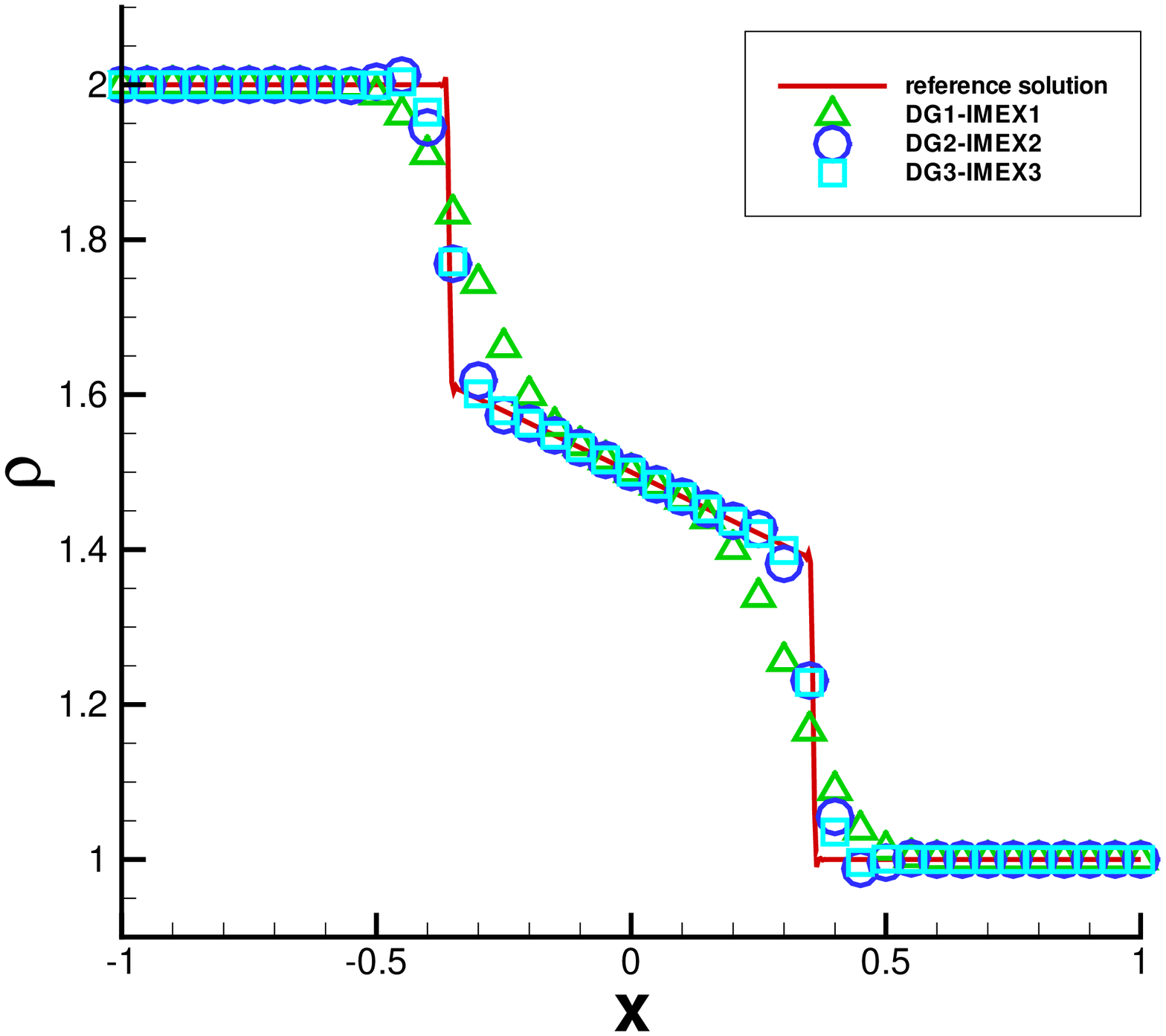},
\includegraphics[totalheight=2.0in]{./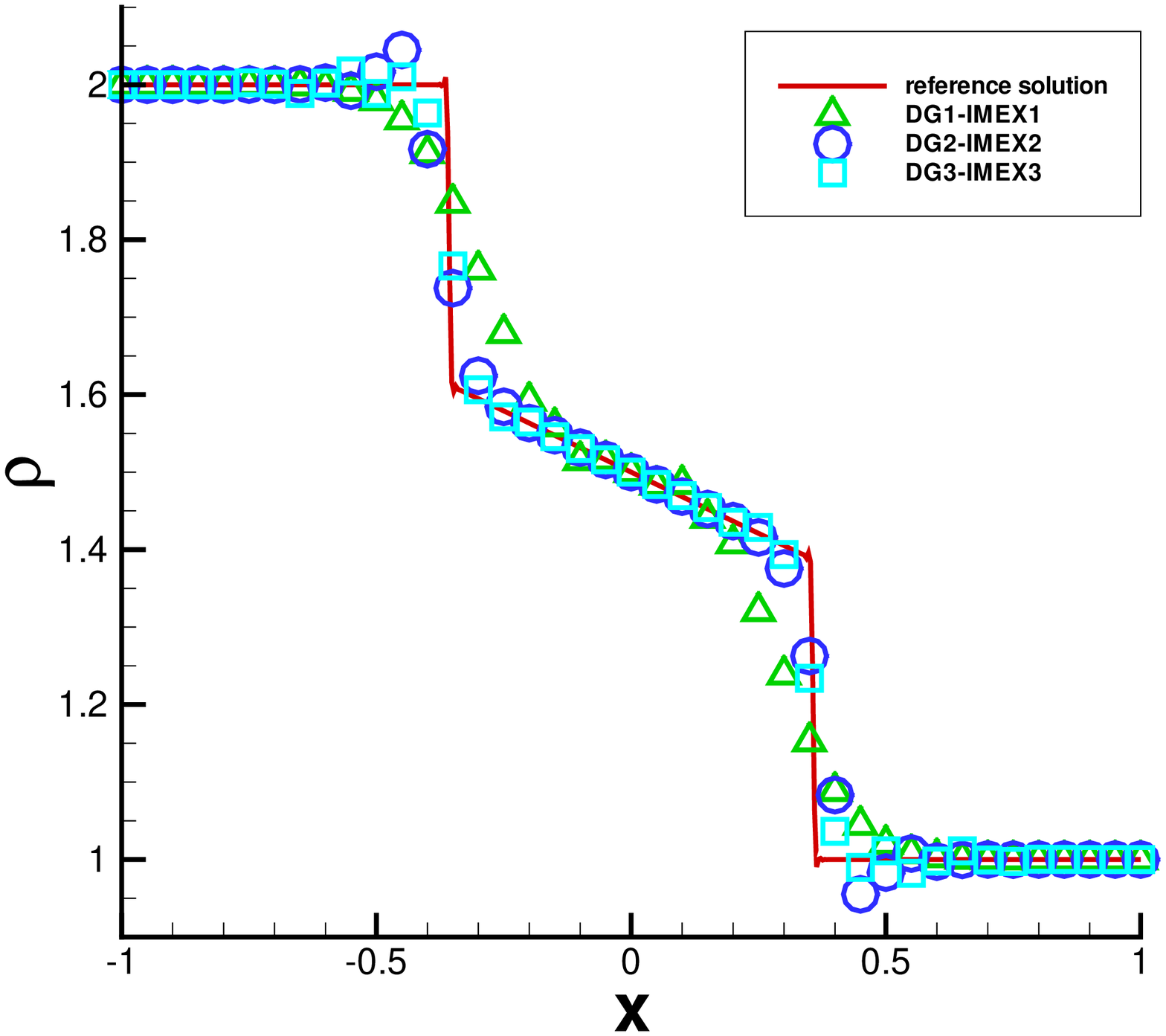}\\
\includegraphics[totalheight=2.0in]{./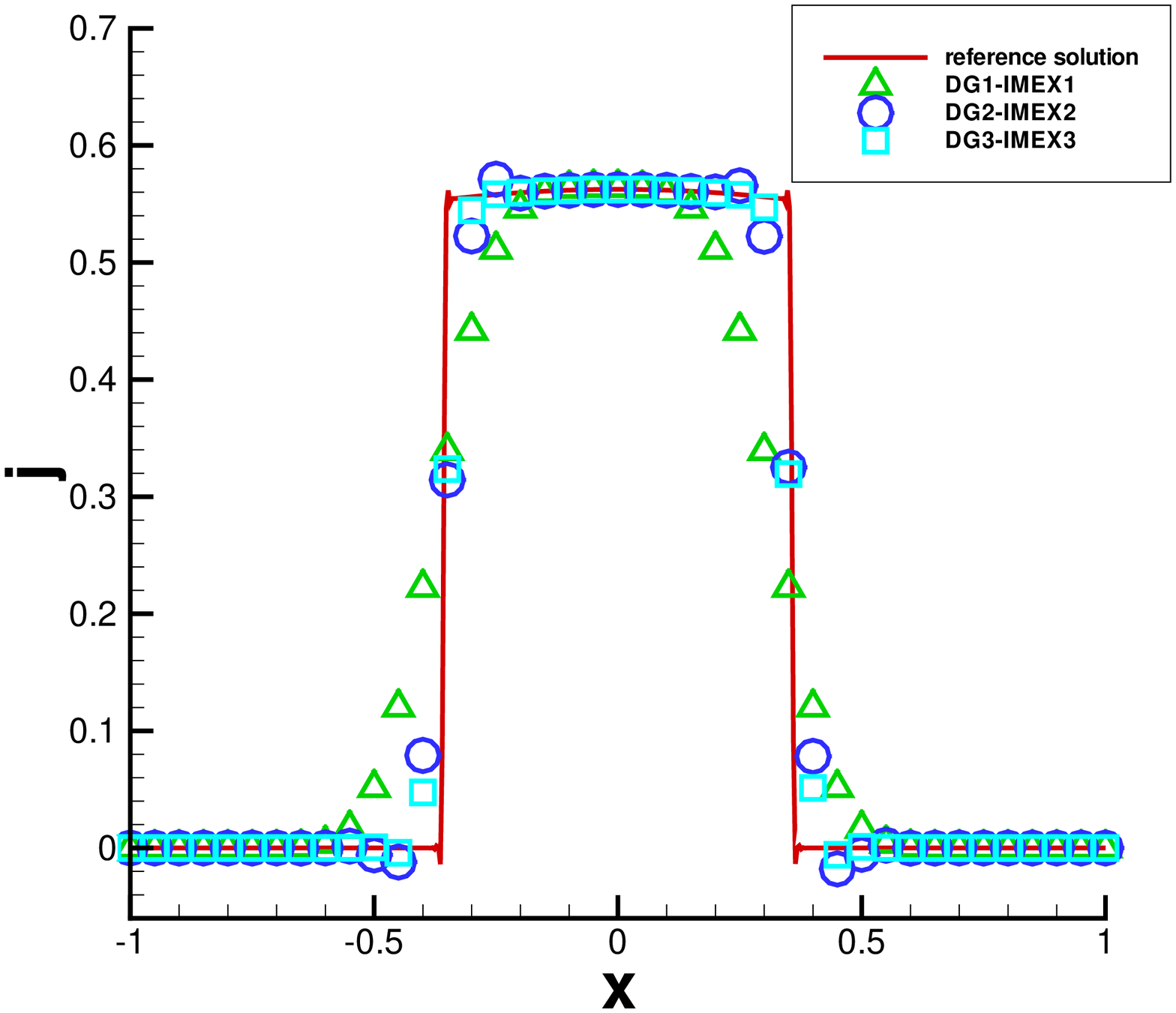},
\includegraphics[totalheight=2.0in]{./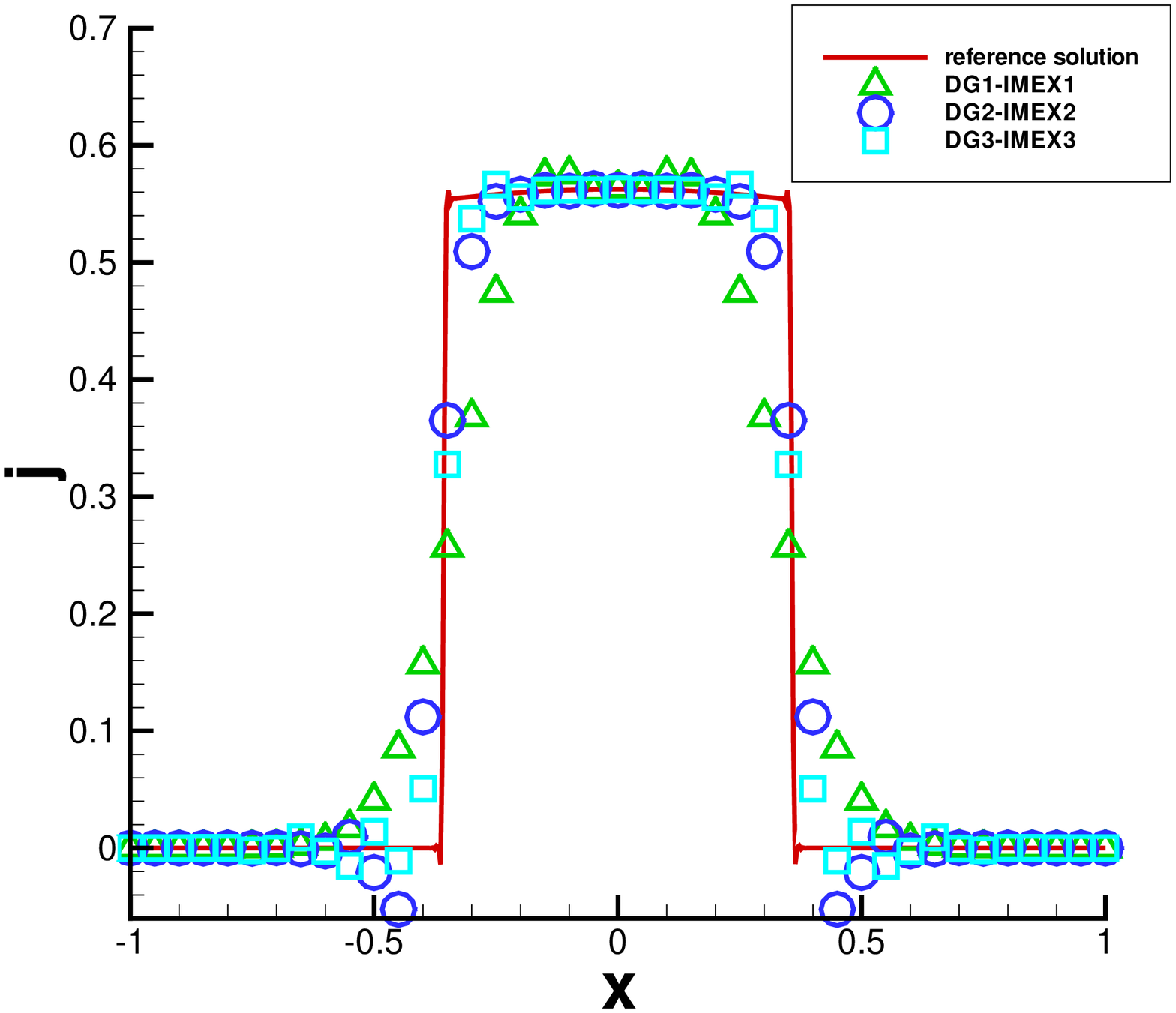}\\
\includegraphics[totalheight=2.0in]{./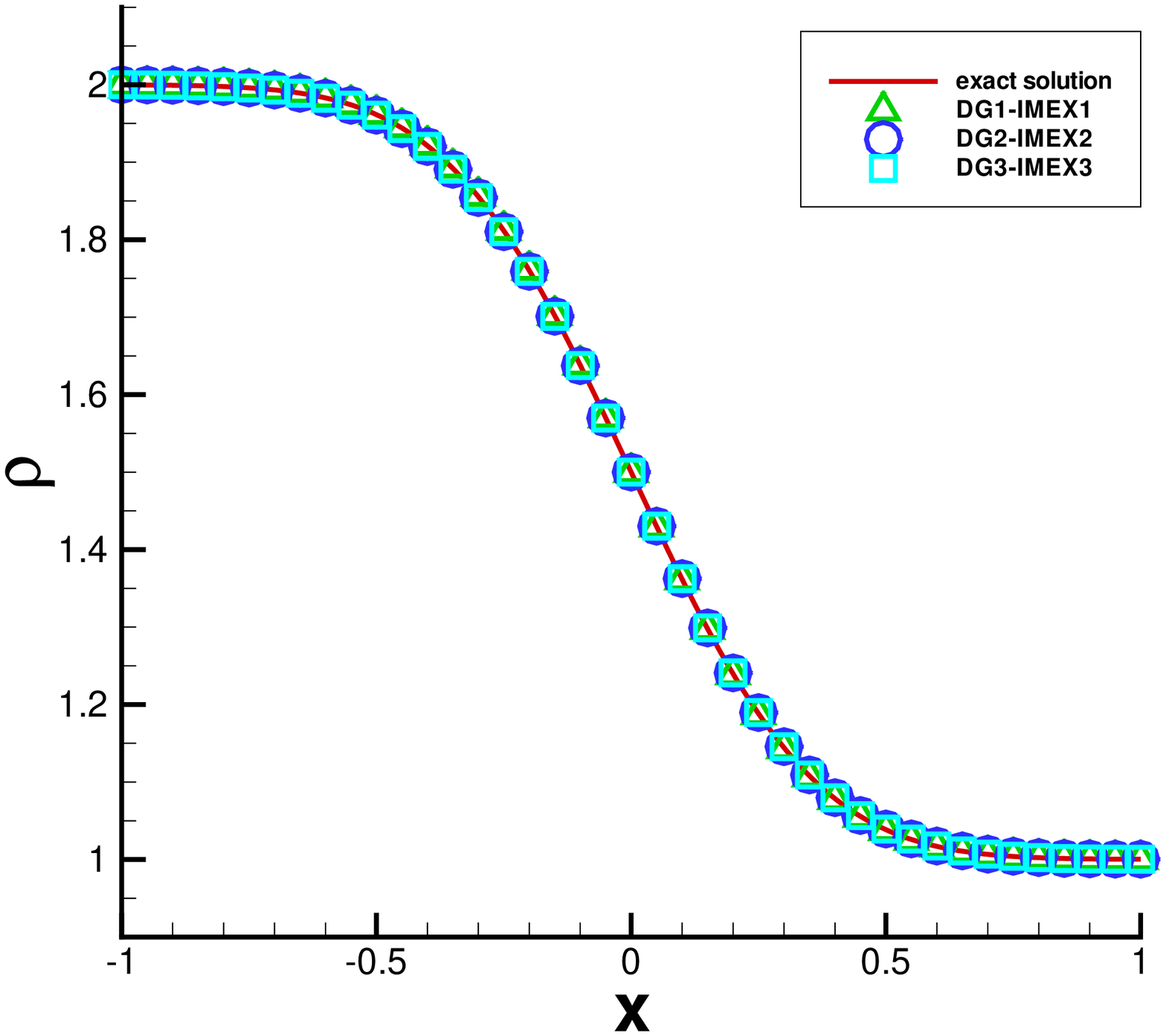},
\includegraphics[totalheight=2.0in]{./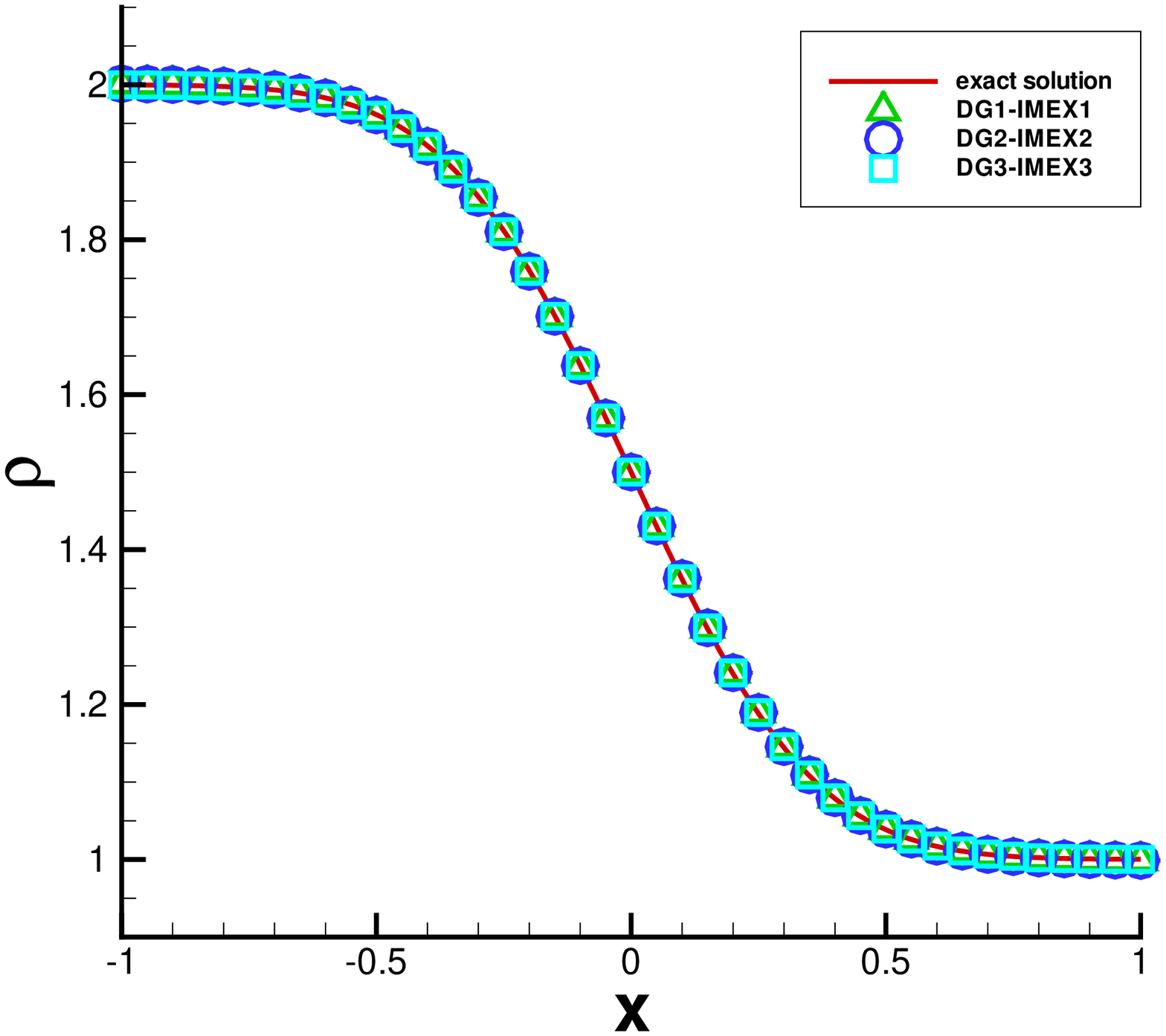}\\
\includegraphics[totalheight=2.0in]{./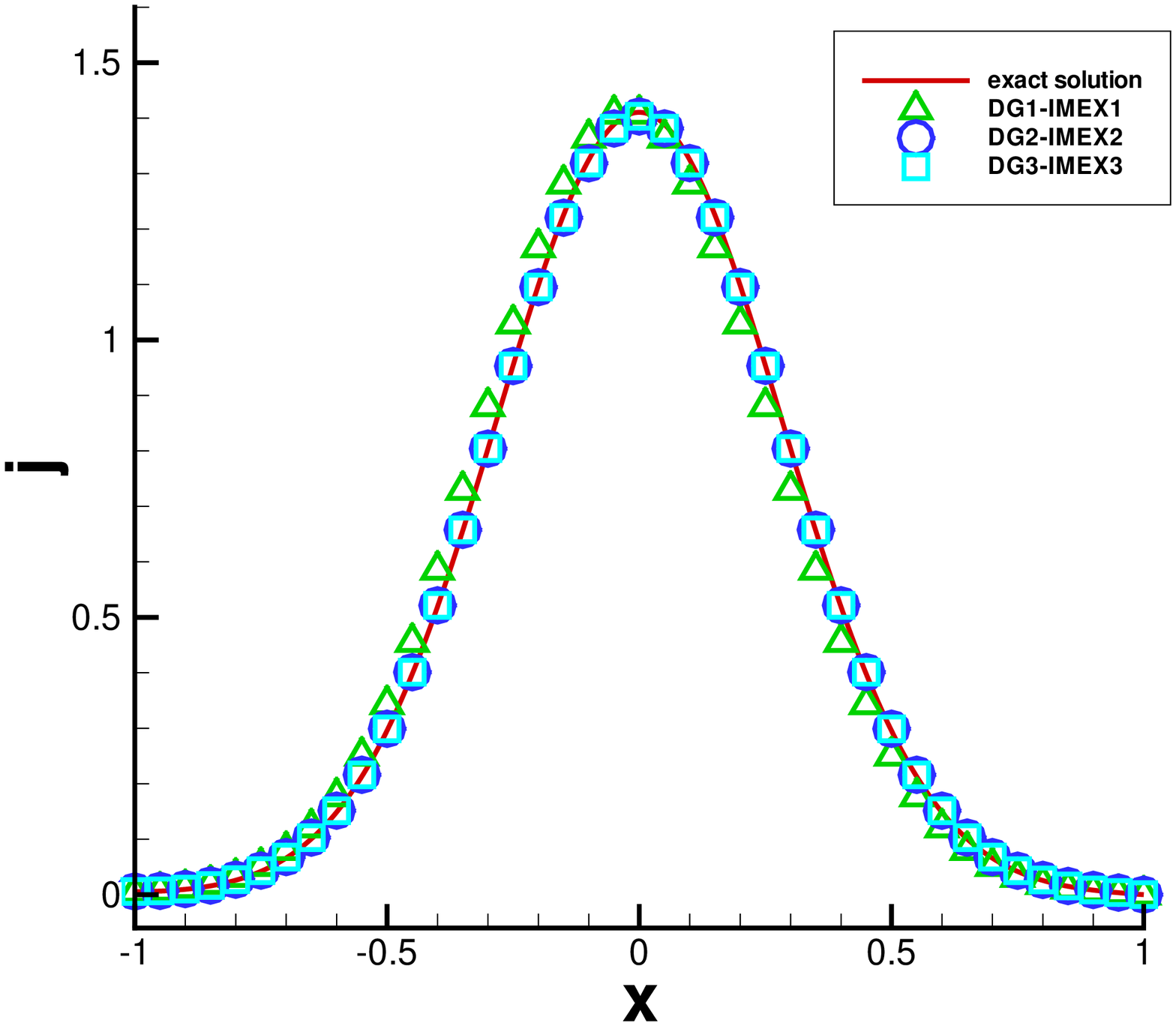},
\includegraphics[totalheight=2.0in]{./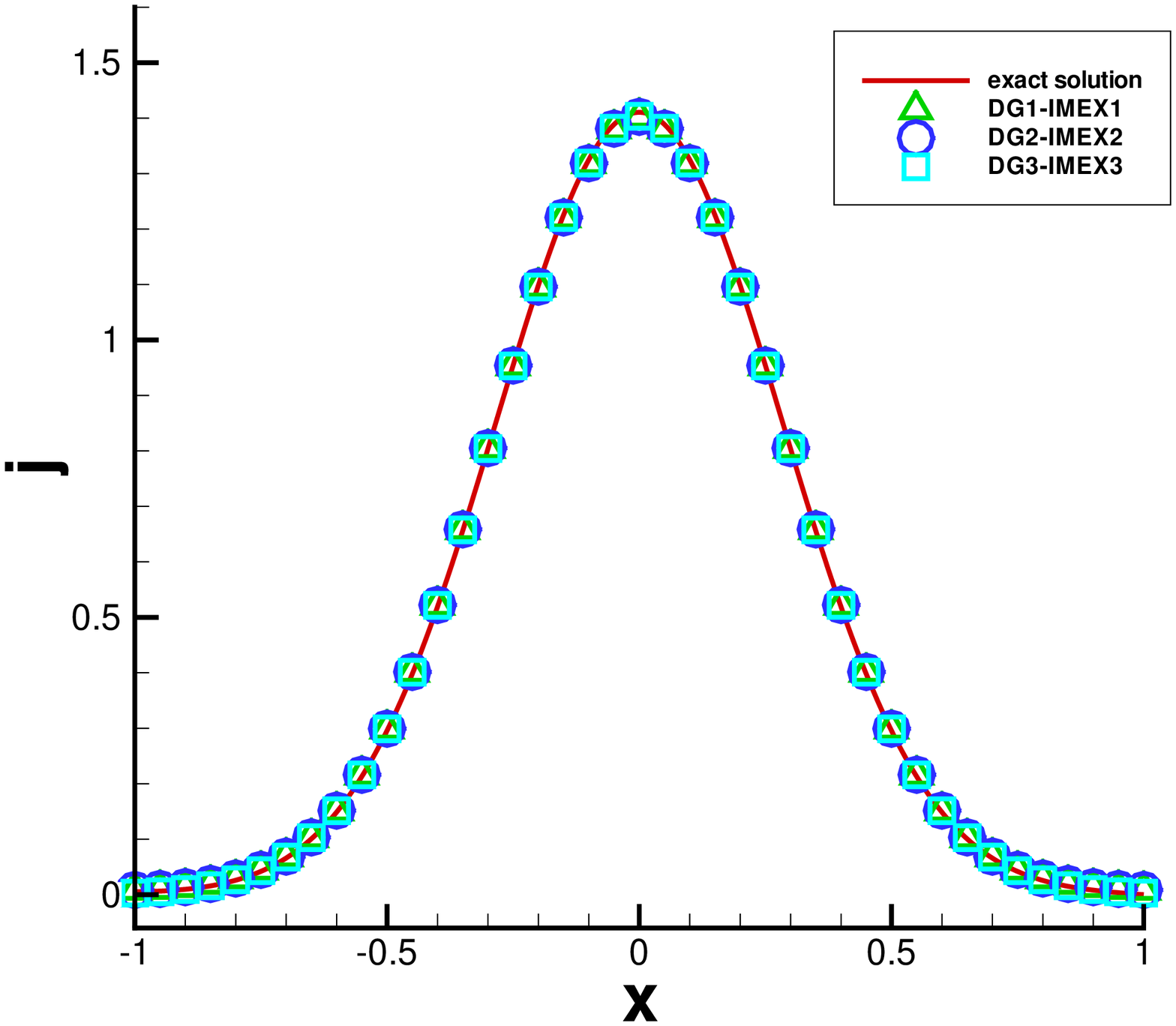}
\caption{Numerical solutions of the telegraph equation with initial conditions (\ref{tele2})
by DG(k+1)-IMEX(k+1), $k=0, 1, 2$ in the rarefied regime $\varepsilon=0.7$ at $T=0.25$ (top two rows) and in the parabolic regime $\varepsilon=10^{-6}$ at $T=0.04$ (bottom two rows). $\Delta x=0.05$.
Left: left-right flux; Right: central flux. The reference solutions are obtained by DG3-IMEX3 with $\Delta x=0.004$, with the left-right flux for the left column, and the central flux for the right column.}
\label{fig1}
\end{figure}

For this example, one can see that the scheme with alternating fluxes has better order of accuracy, see especially the case with odd $k$, than that using central flux when the solutions are smooth. For Riemann problem, the scheme with alternating flux also performs better with smaller numerical oscillation. On the other hand, the scheme using central flux can be more flexible for some other problems, see for example those in Section \ref{sec:4.4}.


\subsection{Advection-diffusion equation}
\label{sec:4.2}

Consider (\ref{C:Ad1}) with $A=1$. As $\varepsilon\rightarrow 0$,
this leads to the classical advection-diffusion equation \eqref{eq:ad:limit} as the limiting equation. By taking into account the convection term with $A=1$, we only use the left-right flux throughout this subsection.

Note that the limiting advection-diffusion equation  \eqref{eq:ad:limit} with $A=1$ admits the following exact solution
\begin{equation}
 \label{ad1}
 \rho(x,t)=e^{-t}\sin(x-t), \qquad j(x,t)=e^{-t}(\sin(x-t)-\cos(x-t)),
\end{equation}
on the domain $[-\pi, \pi]$ with periodic boundary conditions. We start with the initial conditions \eqref{ad1} at $t=0$, and implement our scheme with $\varepsilon=10^{-6}$ up to $T=0.1$. The numerical results are compared with \eqref{ad1},  with errors and convergence orders reported in Table \ref{tab2}. One can observe $(k+1)^{th}$ order of accuracy for DG(k+1)-IMEX(k+1), $k=0, 1, 2$.


\begin{table}
\centering
\caption{$L^1$ errors and orders of $\rho$ and $j$ for the advection-diffusion equation for $\varepsilon=10^{-6}$,  with the numerical solutions compared with the exact solutions \eqref{ad1} to the limiting equation, $T=0.1$, left-right flux. }
\vspace{0.2cm}
  \begin{tabular}{|c|c|c|c|c|c|}
    \hline
    &N  &  $L^1$ error of $\rho$ & order   & $L^1$ error of $j$  & order \\\hline
\multirow{5}{*}{DG1-IMEX1}&    10 &     9.41E-02 &       --&     2.03E-01 &       --  \\  \cline{2-6}
   & 20 &     4.62E-02 &     1.03&     9.94E-02 &     1.03  \\  \cline{2-6}
   & 40 &     2.30E-02 &     1.01&     4.98E-02 &     1.00  \\  \cline{2-6}
   & 80 &     1.15E-02 &     1.00&     2.50E-02 &     0.99  \\  \cline{2-6}
   &160 &     5.74E-03 &     1.00&     1.25E-02 &     1.00  \\  \hline
\multirow{5}{*}{DG2-IMEX2}&    10 &     1.03E-02 &       --&     1.67E-02 &       --  \\  \cline{2-6}
   & 20 &     2.71E-03 &     1.92&     4.10E-03 &     2.02  \\  \cline{2-6}
   & 40 &     7.01E-04 &     1.95&     1.03E-03 &     2.00  \\  \cline{2-6}
   & 80 &     1.79E-04 &     1.97&     2.57E-04 &     2.00  \\  \cline{2-6}
   &160 &     4.51E-05 &     1.99&     6.43E-05 &     2.00  \\  \hline
\multirow{5}{*}{DG3-IMEX3}&    10 &     6.05E-04 &       --&     8.57E-04 &       --  \\  \cline{2-6}
   & 20 &     7.62E-05 &     2.99&     1.08E-04 &     2.98  \\  \cline{2-6}
   & 40 &     9.56E-06 &     3.00&     1.36E-05 &     3.00  \\  \cline{2-6}
   & 80 &     1.20E-06 &     3.00&     1.69E-06 &     3.00  \\  \cline{2-6}
   &160 &     1.50E-07 &     3.00&     2.12E-07 &     3.00  \\  \hline
  \end{tabular}
\label{tab2}
\end{table}

Next we  consider a Riemann problem, with the initial conditions given as
\begin{eqnarray}
 \begin{cases}
   \rho_L=4.0, \quad j_L=0.0, \qquad -10<x<0, \\
   \rho_R=2.0, \quad j_R=0.0, \qquad   0<x<10,
 \end{cases}
 \label{ad2}
\end{eqnarray}
and inflow and outflow boundary conditions. The exact solution for the limiting advection-diffusion equation \eqref{eq:ad:limit} is
\begin{equation}
 \rho(x,t)=\frac{1}{2}(\rho_L+\rho_R)+\frac{1}{2}(\rho_L-\rho_R) erf(\frac{(t-x)}{2\sqrt{t}})
 \label{adex2}
\end{equation}
where $erf$ denotes the error function.
We compute the numerical solutions of DG(k+1)-IMEX(k+1) for $k=0, 1, 2$ with mesh size $\Delta x=0.5$ on the domain $[-10, 10]$ up to $T=3.0$. For $\varepsilon=0.5$, we compare the numerical solution $\rho$ and the reference solution obtained by DG3-IMEX3 on a much fine mesh ($\Delta x=0.04$) in Figure \ref{fig2} (Left). On the right of  Figure \ref{fig2}, we plot the numerical solution from the scheme with $\varepsilon=10^{-6}$, and the analytical solution \eqref{adex2}. In both regimes, numerical solutions match the reference or exact solutions very well. The profiles for $j$ are similar to that for $\rho$, and they are omitted to save space.

\begin{figure}[ht]
\centering
\includegraphics[totalheight=2.0in]{./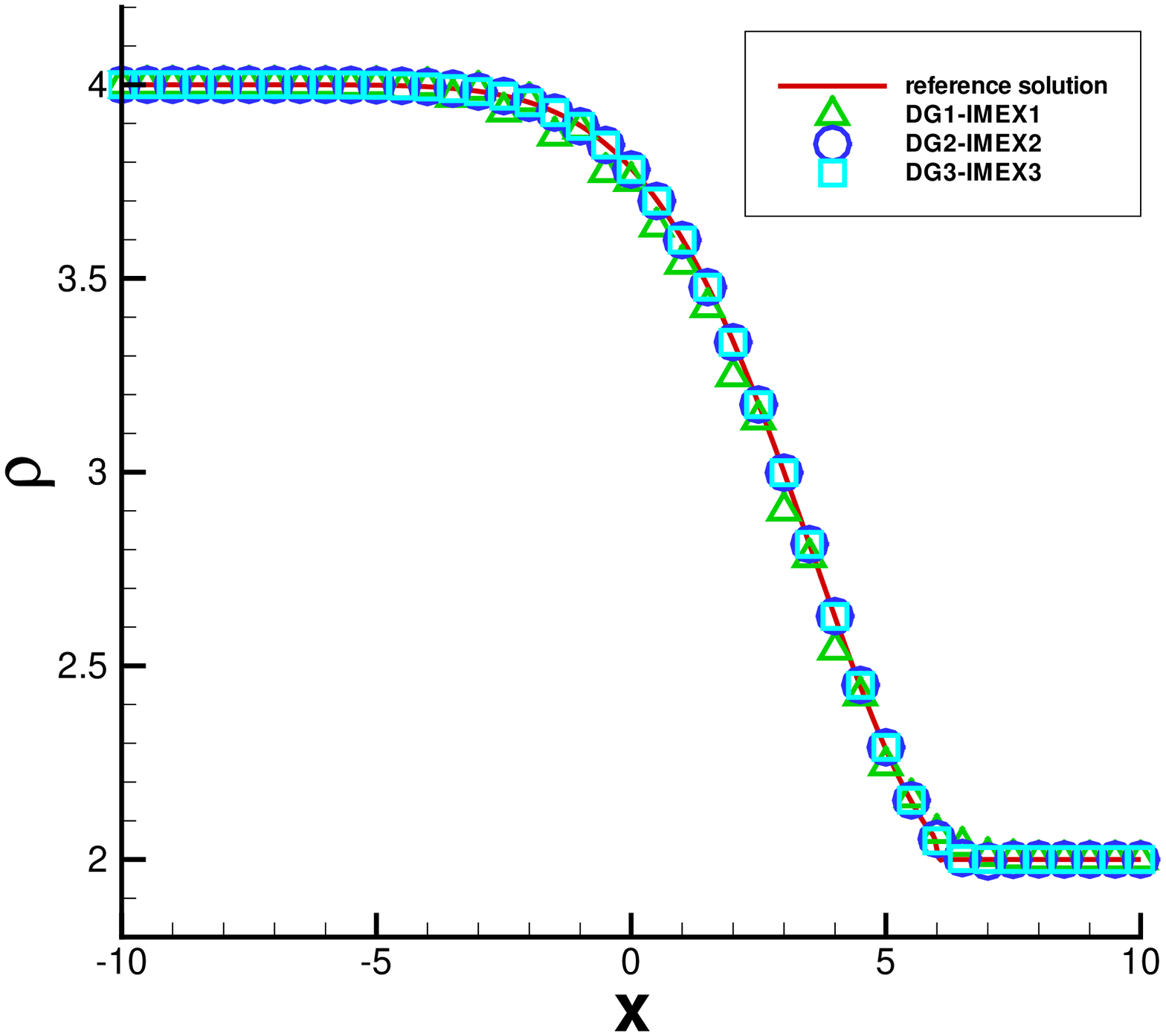},
\includegraphics[totalheight=2.0in]{./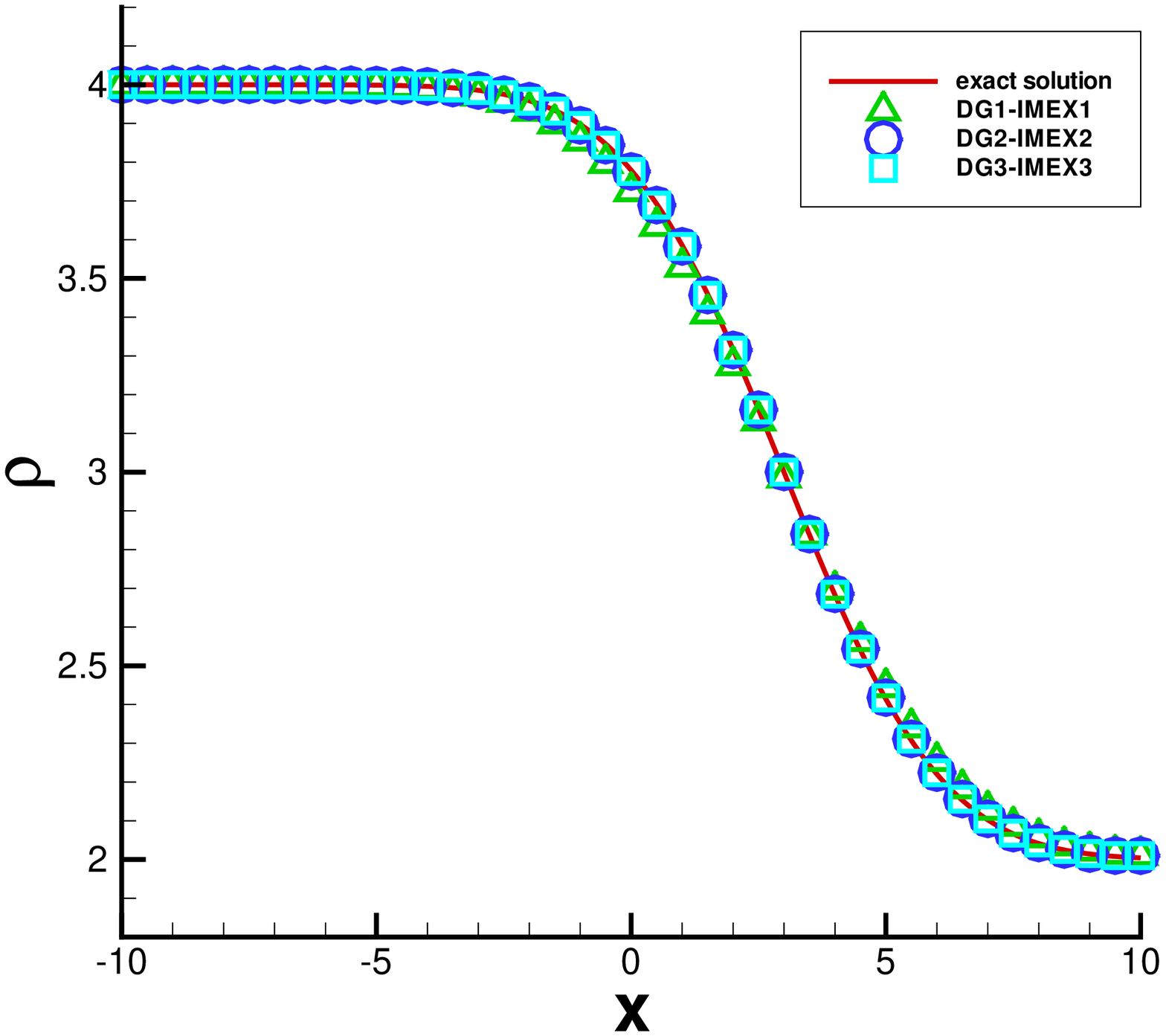}
\caption{Numerical solutions of the advection-diffusion equation with initial conditions (\ref{ad2}) by DG(k+1)-IMEX(k+1), $k=0, 1, 2$ at $T=3.0$. Left-right flux and $\Delta x=0.5$.
Left: the rarefied regime $\varepsilon=0.5$; Right: the parabolic regime $\varepsilon=10^{-6}$.}
\label{fig2}
\end{figure}

\subsection{Viscous Burgers' equation}
\label{sec:4.3}

Consider (\ref{C:B1}) with $C=1/2$.
The initial conditions are chosen to be two local Maxwellian
\begin{eqnarray}
 \begin{cases}
  \rho_L=2.0,  \qquad -10<x<0,\\
  \rho_R=1.0,  \qquad 0<x<10,
 \end{cases}
 \label{burger1}
\end{eqnarray}
with $j=\rho^2/(1+\sqrt{1+\rho^2\varepsilon^2})$. For this problem, we show in Figure \ref{fig3} the numerical solutions $\rho$ and $j$ of DG(k+1)-IMEX(k+1), $k=0, 1, 2$ using the left-right flux in the rarefied regime ($\varepsilon=0.4$) and in the parabolic regime ($\varepsilon=10^{-6}$) with
$\Delta x=0.25$, which are compared with the reference solutions obtained by DG3-IMEX3 with $\Delta x=0.04$. Numerical solutions are in very good agreement with the reference solutions.

\begin{figure}[ht]
\centering
\includegraphics[totalheight=2.0in]{./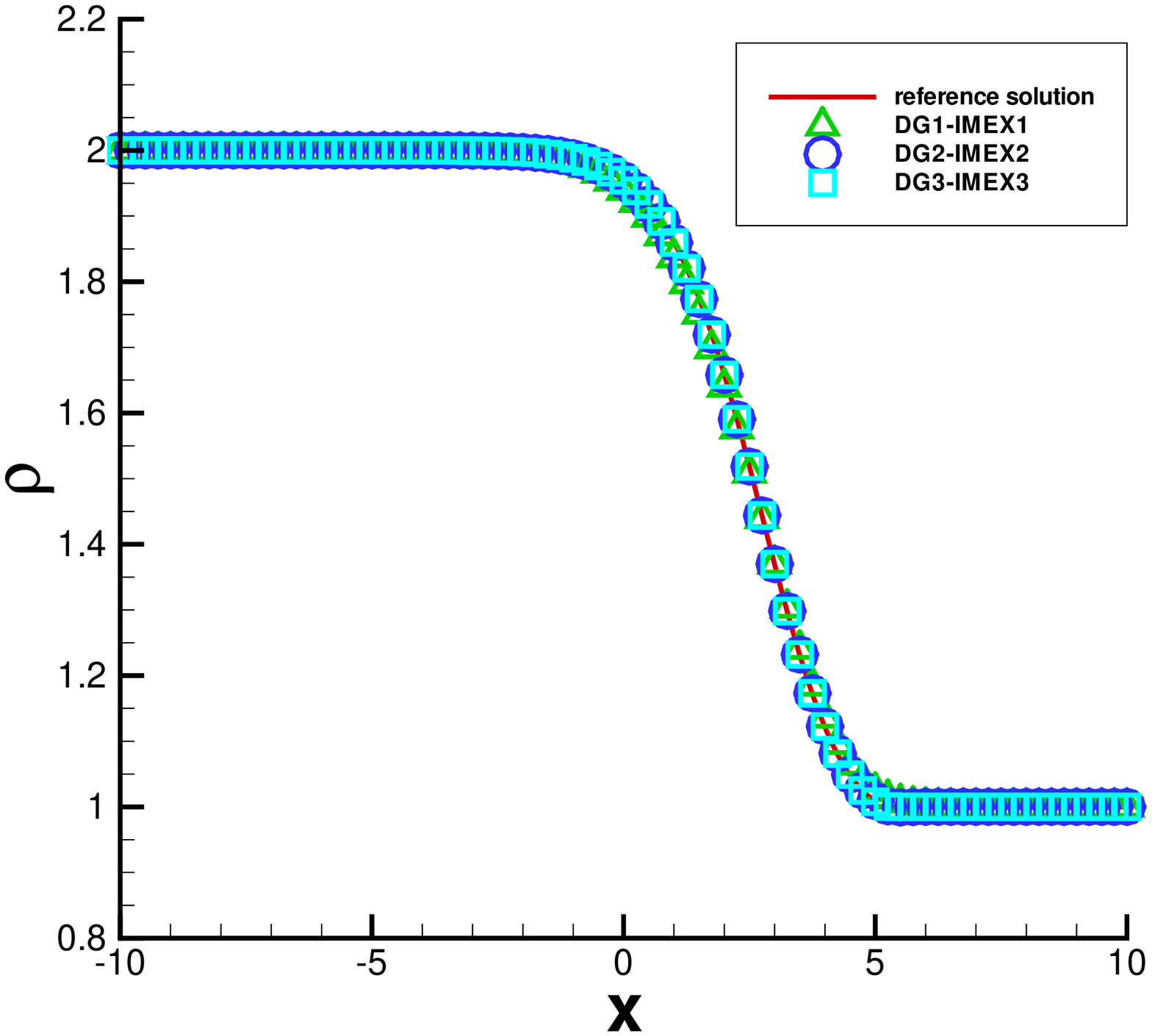},
\includegraphics[totalheight=2.0in]{./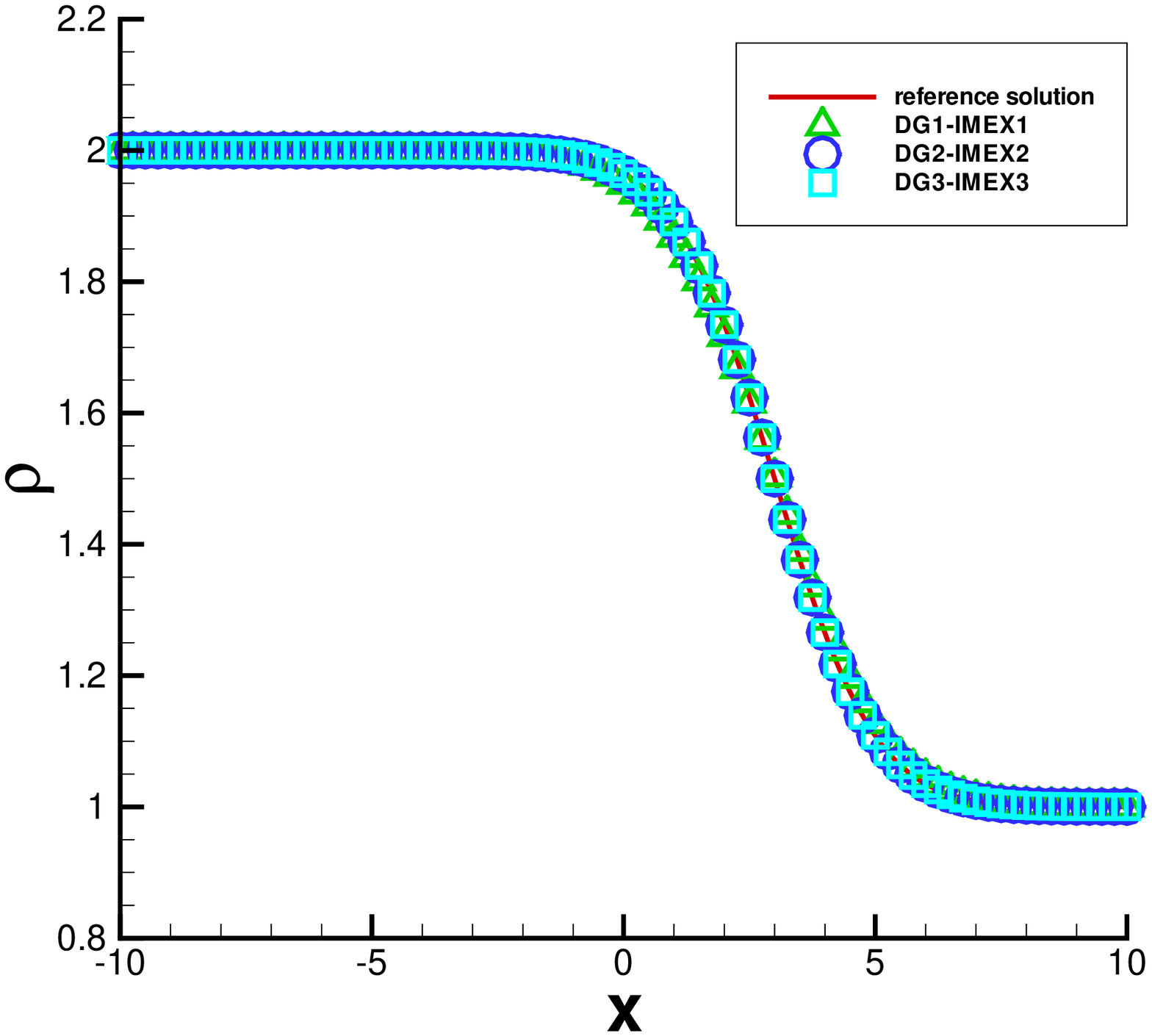}\\
\includegraphics[totalheight=2.0in]{./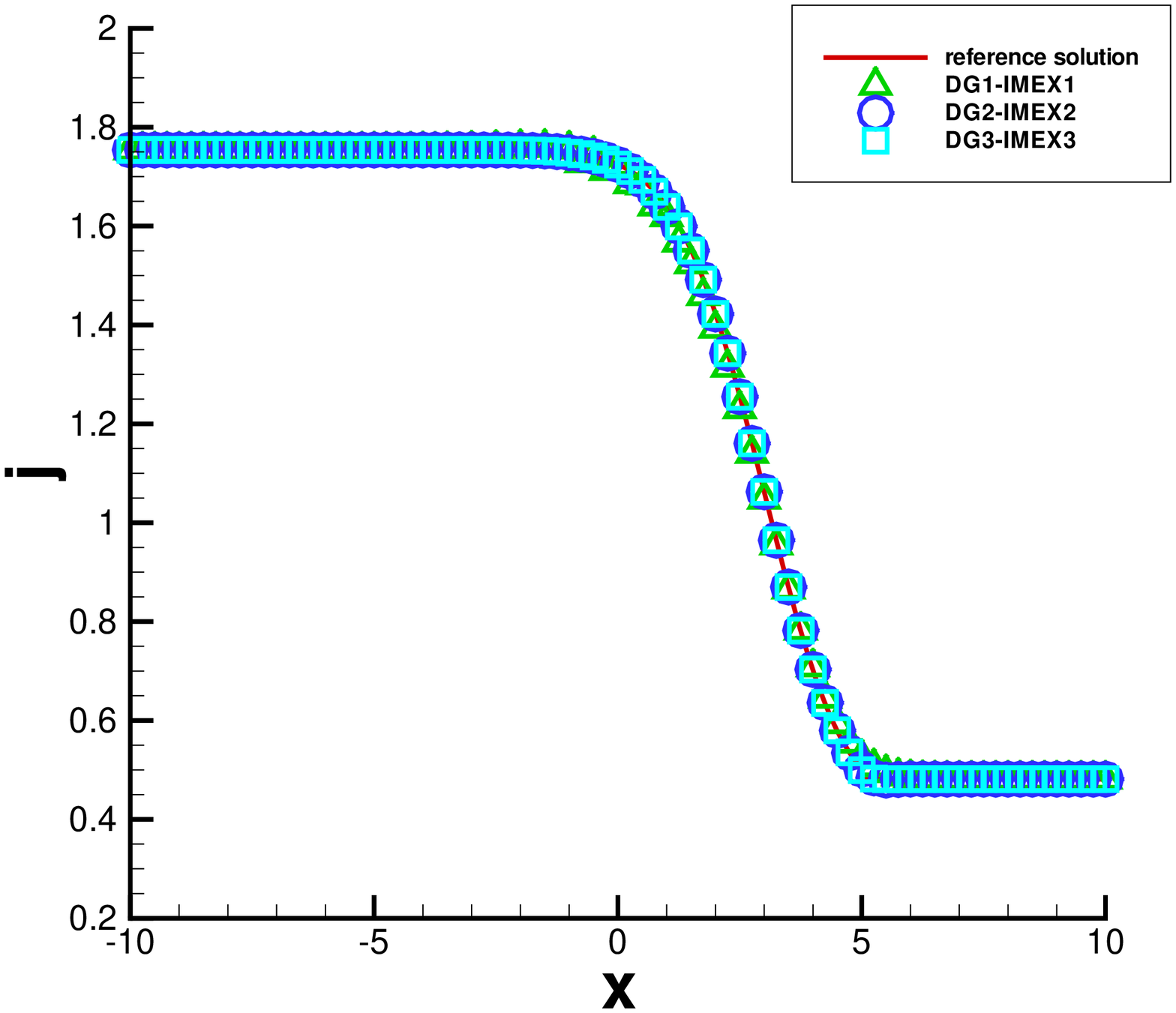},
\includegraphics[totalheight=2.0in]{./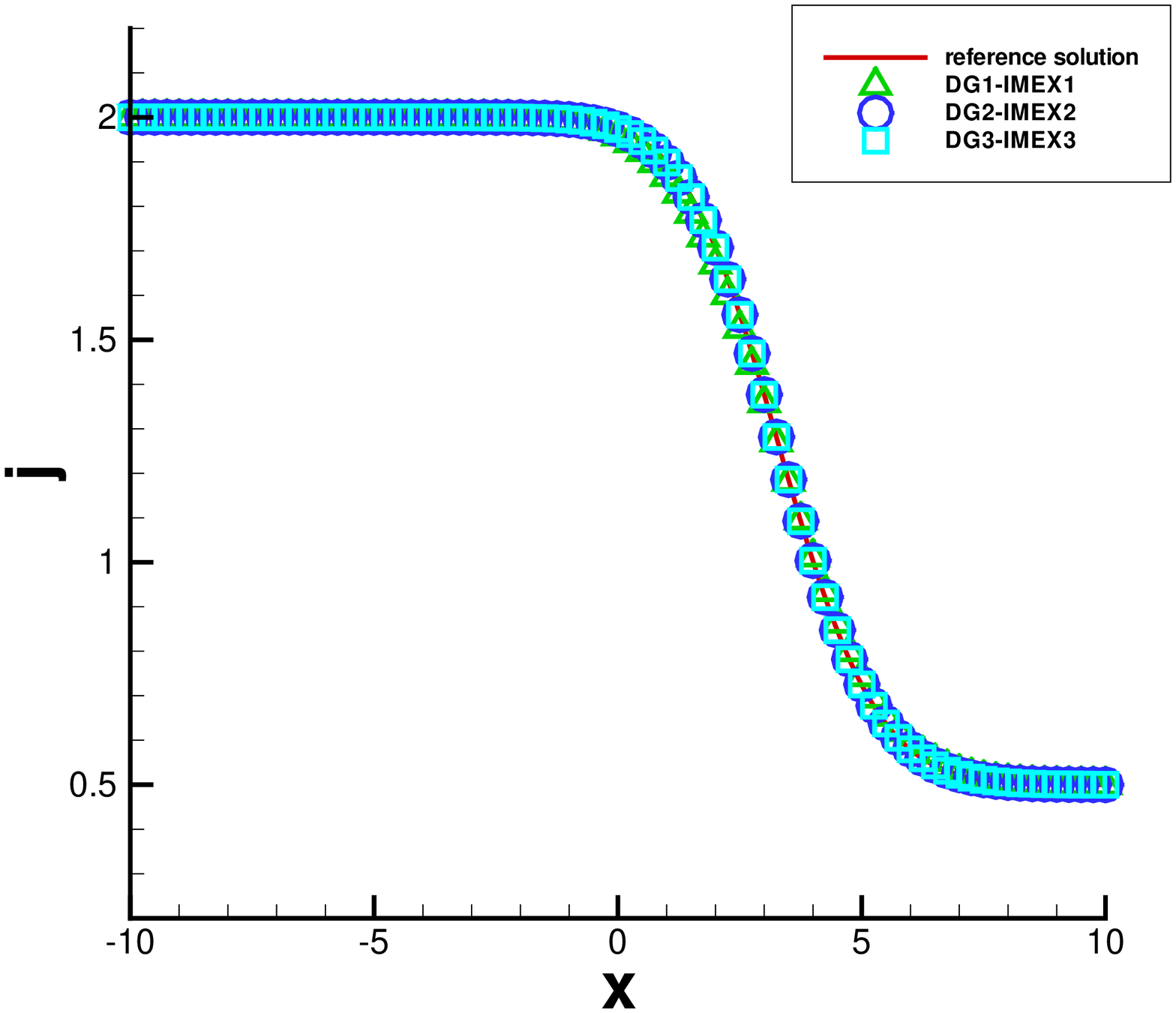}\\
\caption{Numerical solutions of Burgers' equation with initial conditions (\ref{burger1}) by DG(k+1)-IMEX(k+1), $k=0, 1, 2$ at $T=2.0$. Left-right flux and $\Delta x=0.25$.
Left: the rarefied regime $\varepsilon=0.4$; Right: the parabolic regime
$\varepsilon=10^{-6}$. Symbol: numerical solution; Solid line: reference solution.}
\label{fig3}
\end{figure}

For this nonlinear example, an exact smooth shock wave solution was given by Ruijgrok and Wu in \cite{ruijgrok1982completely}.
We choose the similar boundary condition as (\ref{burger1})
\begin{eqnarray}
\lim_{x\rightarrow +\infty}\rho= \rho^+=2.0,  \qquad \lim_{x\rightarrow -\infty}\rho= \rho^-=1.0,
 \label{burger2}
\end{eqnarray}
with $j^\pm=(\rho^\pm)^2/(1+\sqrt{1+(\rho^\pm)^2\varepsilon^2})$. Let
\begin{eqnarray}
u^\pm=\rho^\pm+\varepsilon j^\pm, \quad v^\pm=\rho^\pm-\varepsilon j^\pm.
\end{eqnarray}
The distribution function $u(x,t)$ and $v(x,t)$ are given by
\begin{eqnarray*}
u(x,t)=\frac{u^++u^-e^{-(\xi-\xi_0)/X_0}}{1+e^{-(\xi-\xi_0)/X_0}}, \quad
v(x,t)=\frac{v^++v^-e^{-(\xi-\xi_0)/X_0}}{1+e^{-(\xi-\xi_0)/X_0}}
\end{eqnarray*}
where $\xi=(x-w t/\varepsilon)/2$ and let $\xi_0=0$. The width $X_0$ and the velocity $w$ of the shock wave
are
\begin{eqnarray*}
X_0=\frac{1+w}{u^--u^+}=\frac{1-w}{v^--v^+}, \quad w=\frac{u^--u^+-v^-+v^+}{u^--u^++v^--v^+}.
\end{eqnarray*}
With above, the exact solutions $\rho(x,t)$ and $j(x,t)$ are given by
\begin{eqnarray}
\rho(x,t)=\frac{u(x,t)+v(x,t)}{2}, \quad j(x,t)=\frac{u(x,t)-v(x,t)}{2\varepsilon}
\label{burger2end}
\end{eqnarray}
 The computational domain is taken to be $[-40, 40]$ with inflow and outflow boundary conditions. Starting from the smooth initial condition at $t=0$, we compute up to $T=1$ for $\varepsilon=0.5, 10^{-2}, 10^{-6}$.  The numerical results by  DG(k+1)-IMEX(k+1) with the left-right alternating flux are presented in Tables \ref{tab31}-\ref{tab33} for  $k=0, 1, 2$, respectively, and $(k+1)^{th}$ order is observed.
  In Tables \ref{tab34}-\ref{tab36},  $(k+1)^{th}$ order is observed for even $k$ and $k^{th}$ order for odd $k$ when DG(k+1)-IMEX(k+1) are combined with the central flux \eqref{eq:flux:2}.



\begin{table}
\centering
\caption{$L^1$ errors and orders of $\rho$ and $j$ for Burgers' equation with the exact solution (\ref{burger2})-(\ref{burger2end}), $T=1.0$, DG1-IMEX1 with left-right flux.}
\vspace{0.2cm}
  \begin{tabular}{|c|c|c|c|c|c|}
    \hline
    &N  &  $L^1$ error of $\rho$ & order   & $L^1$ error of $j$  & order \\\hline
\multirow{5}{*}{$\varepsilon=0.5$}&    10 &     2.98E-02 &       --&     3.46E-02 &       --  \\  \cline{2-6}
   & 20 &     1.39E-02 &     1.11&     1.57E-02 &     1.14  \\  \cline{2-6}
   & 40 &     6.48E-03 &     1.10&     7.92E-03 &     0.98  \\  \cline{2-6}
   & 80 &     3.25E-03 &     0.99&     3.93E-03 &     1.01  \\  \cline{2-6}
   &160 &     1.66E-03 &     0.97&     1.89E-03 &     1.06  \\  \hline
\multirow{5}{*}{$\varepsilon=10^{-2}$}&    10 &     3.08E-02 &       --&     4.52E-02 &       --  \\  \cline{2-6}
   & 20 &     1.41E-02 &     1.13&     2.00E-02 &     1.18  \\  \cline{2-6}
   & 40 &     6.38E-03 &     1.14&     9.13E-03 &     1.13  \\  \cline{2-6}
   & 80 &     3.39E-03 &     0.91&     4.68E-03 &     0.96  \\  \cline{2-6}
   &160 &     1.76E-03 &     0.94&     2.42E-03 &     0.95  \\  \hline
\multirow{5}{*}{$\varepsilon=10^{-6}$}&    10 &     3.08E-02 &       --&     4.52E-02 &       --  \\  \cline{2-6}
   & 20 &     1.41E-02 &     1.13&     2.00E-02 &     1.18  \\  \cline{2-6}
   & 40 &     6.38E-03 &     1.14&     9.14E-03 &     1.13  \\  \cline{2-6}
   & 80 &     3.39E-03 &     0.91&     4.68E-03 &     0.97  \\  \cline{2-6}
   &160 &     1.76E-03 &     0.94&     2.42E-03 &     0.95  \\  \hline
  \end{tabular}
\label{tab31}
\end{table}

\begin{table}
\centering
\caption{$L^1$ errors and orders of $\rho$ and $j$ for Burgers' equation with the exact solution (\ref{burger2})-(\ref{burger2end}), $T=1.0$, DG2-IMEX2 with left-right flux.}
\vspace{0.2cm}
  \begin{tabular}{|c|c|c|c|c|c|}
    \hline
    &N  &  $L^1$ error of $\rho$ & order   & $L^1$ error of $j$  & order \\\hline
\multirow{5}{*}{$\varepsilon=0.5$}&    10 &     5.78E-03 &       --&     7.07E-03 &       --  \\  \cline{2-6}
   & 20 &     1.84E-03 &     1.65&     2.48E-03 &     1.51  \\  \cline{2-6}
   & 40 &     4.27E-04 &     2.10&     6.78E-04 &     1.87  \\  \cline{2-6}
   & 80 &     9.80E-05 &     2.13&     1.50E-04 &     2.18  \\  \cline{2-6}
   &160 &     2.50E-05 &     1.97&     4.72E-05 &     1.67  \\  \hline
\multirow{5}{*}{$\varepsilon=10^{-2}$}&    10 &     6.26E-03 &       --&     9.20E-03 &       --  \\  \cline{2-6}
   & 20 &     1.85E-03 &     1.76&     3.57E-03 &     1.36  \\  \cline{2-6}
   & 40 &     4.27E-04 &     2.11&     9.83E-04 &     1.86  \\  \cline{2-6}
   & 80 &     1.22E-04 &     1.80&     2.43E-04 &     2.02  \\  \cline{2-6}
   &160 &     3.34E-05 &     1.87&     6.03E-05 &     2.01  \\  \hline
\multirow{5}{*}{$\varepsilon=10^{-6}$}&    10 &     6.24E-03 &       --&     9.25E-03 &       --  \\  \cline{2-6}
   & 20 &     1.85E-03 &     1.76&     3.62E-03 &     1.35  \\  \cline{2-6}
   & 40 &     4.29E-04 &     2.11&     9.84E-04 &     1.88  \\  \cline{2-6}
   & 80 &     1.23E-04 &     1.80&     2.43E-04 &     2.02  \\  \cline{2-6}
   &160 &     3.37E-05 &     1.87&     6.04E-05 &     2.01  \\  \hline
  \end{tabular}
\label{tab32}
\end{table}

\begin{table}
\centering
\caption{$L^1$ errors and orders of $\rho$ and $j$ for Burgers' equation with the exact solution (\ref{burger2})-(\ref{burger2end}), $T=1.0$, DG3-IMEX3 with left-right flux.}
\vspace{0.2cm}
  \begin{tabular}{|c|c|c|c|c|c|}
    \hline
    &N  &  $L^1$ error of $\rho$ & order   & $L^1$ error of $j$  & order \\\hline
\multirow{5}{*}{$\varepsilon=0.5$}&    10 &     2.51E-03 &       --&     5.39E-03 &       --  \\  \cline{2-6}
   & 20 &     2.48E-04 &     3.34&     4.06E-04 &     3.73  \\  \cline{2-6}
   & 40 &     2.74E-05 &     3.18&     4.70E-05 &     3.11  \\  \cline{2-6}
   & 80 &     3.54E-06 &     2.96&     6.11E-06 &     2.94  \\  \cline{2-6}
   &160 &     4.56E-07 &     2.96&     7.66E-07 &     3.00  \\  \hline
\multirow{5}{*}{$\varepsilon=10^{-2}$}&    10 &     2.13E-03 &       --&     3.92E-03 &       --  \\  \cline{2-6}
   & 20 &     2.26E-04 &     3.24&     4.26E-04 &     3.20  \\  \cline{2-6}
   & 40 &     3.30E-05 &     2.77&     5.93E-05 &     2.84  \\  \cline{2-6}
   & 80 &     4.71E-06 &     2.81&     7.66E-06 &     2.95  \\  \cline{2-6}
   &160 &     6.23E-07 &     2.92&     9.68E-07 &     2.98  \\  \hline
\multirow{5}{*}{$\varepsilon=10^{-6}$}&    10 &     2.13E-03 &       --&     4.01E-03 &       --  \\  \cline{2-6}
   & 20 &     2.26E-04 &     3.24&     4.27E-04 &     3.23  \\  \cline{2-6}
   & 40 &     3.33E-05 &     2.76&     5.97E-05 &     2.84  \\  \cline{2-6}
   & 80 &     4.76E-06 &     2.81&     7.78E-06 &     2.94  \\  \cline{2-6}
   &160 &     6.29E-07 &     2.92&     9.94E-07 &     2.97  \\  \hline
  \end{tabular}
\label{tab33}
\end{table}


\begin{table}
\centering
\caption{$L^1$ errors and orders of $\rho$ and $j$ for Burgers' equation with the exact solution (\ref{burger2})-(\ref{burger2end}), $T=1.0$, DG1-IMEX1 with central flux.}
\vspace{0.2cm}
  \begin{tabular}{|c|c|c|c|c|c|}
    \hline
    &N  &  $L^1$ error of $\rho$ & order   & $L^1$ error of $j$  & order \\\hline
\multirow{5}{*}{$\varepsilon=0.5$}&    10 &     2.89E-02 &       --&     3.71E-02 &       --  \\  \cline{2-6}
   & 20 &     1.30E-02 &     1.16&     1.61E-02 &     1.20  \\  \cline{2-6}
   & 40 &     6.48E-03 &     1.00&     8.51E-03 &     0.92  \\  \cline{2-6}
   & 80 &     3.18E-03 &     1.03&     4.06E-03 &     1.07  \\  \cline{2-6}
   &160 &     1.55E-03 &     1.03&     1.93E-03 &     1.08  \\  \hline
\multirow{5}{*}{$\varepsilon=10^{-2}$}&    10 &     3.02E-02 &       --&     5.08E-02 &       --  \\  \cline{2-6}
   & 20 &     1.33E-02 &     1.18&     2.17E-02 &     1.23  \\  \cline{2-6}
   & 40 &     7.07E-03 &     0.91&     1.11E-02 &     0.97  \\  \cline{2-6}
   & 80 &     3.18E-03 &     1.15&     4.80E-03 &     1.21  \\  \cline{2-6}
   &160 &     1.53E-03 &     1.05&     2.31E-03 &     1.06  \\  \hline
\multirow{5}{*}{$\varepsilon=10^{-6}$}&    10 &     3.02E-02 &       --&     5.08E-02 &       --  \\  \cline{2-6}
   & 20 &     1.33E-02 &     1.18&     2.17E-02 &     1.23  \\  \cline{2-6}
   & 40 &     7.07E-03 &     0.91&     1.11E-02 &     0.97  \\  \cline{2-6}
   & 80 &     3.18E-03 &     1.15&     4.81E-03 &     1.21  \\  \cline{2-6}
   &160 &     1.53E-03 &     1.05&     2.31E-03 &     1.06  \\  \hline
  \end{tabular}
\label{tab34}
\end{table}

\begin{table}
\centering
\caption{$L^1$ errors and orders of $\rho$ and $j$ for Burgers' equation with the exact solution (\ref{burger2})-(\ref{burger2end}), $T=1.0$, DG2-IMEX2 with central flux.}
\vspace{0.2cm}
  \begin{tabular}{|c|c|c|c|c|c|}
    \hline
    &N  &  $L^1$ error of $\rho$ & order   & $L^1$ error of $j$  & order \\ \hline
\multirow{5}{*}{$\varepsilon=0.5$}&    10 &     6.98E-03 &       --&     7.74E-03 &       --  \\  \cline{2-6}
   & 20 &     3.46E-03 &     1.01&     3.33E-03 &     1.21  \\  \cline{2-6}
   & 40 &     1.66E-03 &     1.06&     1.25E-03 &     1.41  \\  \cline{2-6}
   & 80 &     8.23E-04 &     1.01&     4.48E-04 &     1.48  \\  \cline{2-6}
   &160 &     4.17E-04 &     0.98&     1.43E-04 &     1.65  \\  \hline
\multirow{5}{*}{$\varepsilon=10^{-2}$}&    10 &     8.25E-03 &       --&     1.50E-02 &       --  \\  \cline{2-6}
   & 20 &     3.65E-03 &     1.18&     7.21E-03 &     1.06  \\  \cline{2-6}
   & 40 &     1.74E-03 &     1.07&     3.49E-03 &     1.05  \\  \cline{2-6}
   & 80 &     8.50E-04 &     1.04&     1.68E-03 &     1.06  \\  \cline{2-6}
   &160 &     4.23E-04 &     1.01&     7.96E-04 &     1.07  \\  \hline
\multirow{5}{*}{$\varepsilon=10^{-6}$}&    10 &     8.23E-03 &       --&     1.51E-02 &       --  \\  \cline{2-6}
   & 20 &     3.65E-03 &     1.17&     7.29E-03 &     1.05  \\  \cline{2-6}
   & 40 &     1.74E-03 &     1.07&     3.57E-03 &     1.03  \\  \cline{2-6}
   & 80 &     8.42E-04 &     1.05&     1.75E-03 &     1.03  \\  \cline{2-6}
   &160 &     4.14E-04 &     1.02&     8.63E-04 &     1.02  \\  \hline
  \end{tabular}
\label{tab35}
\end{table}

\begin{table}
\centering
\caption{$L^1$ errors and orders of $\rho$ and $j$ for Burgers' equation with the exact solution (\ref{burger2})-(\ref{burger2end}), $T=1.0$, DG3-IMEX3 with central flux.}
\vspace{0.2cm}
  \begin{tabular}{|c|c|c|c|c|c|}
    \hline
    &N  &  $L^1$ error of $\rho$ & order   & $L^1$ error of $j$  & order \\\hline
\multirow{5}{*}{$\varepsilon=0.5$}&    10 &     3.98E-03 &       --&     4.80E-03 &       --  \\  \cline{2-6}
   & 20 &     5.97E-04 &     2.74&     6.66E-04 &     2.85  \\  \cline{2-6}
   & 40 &     4.11E-05 &     3.86&     4.20E-05 &     3.99  \\  \cline{2-6}
   & 80 &     4.20E-06 &     3.29&     4.29E-06 &     3.29  \\  \cline{2-6}
   &160 &     5.08E-07 &     3.05&     4.95E-07 &     3.12  \\  \hline
\multirow{5}{*}{$\varepsilon=10^{-2}$}&    10 &     4.92E-03 &       --&     1.05E-02 &       --  \\  \cline{2-6}
   & 20 &     7.21E-04 &     2.77&     1.52E-03 &     2.79  \\  \cline{2-6}
   & 40 &     3.85E-05 &     4.23&     6.54E-05 &     4.54  \\  \cline{2-6}
   & 80 &     3.55E-06 &     3.44&     5.34E-06 &     3.61  \\  \cline{2-6}
   &160 &     4.20E-07 &     3.08&     6.29E-07 &     3.08  \\  \hline
\multirow{5}{*}{$\varepsilon=10^{-6}$}&    10 &     4.95E-03 &       --&     1.06E-02 &       --  \\  \cline{2-6}
   & 20 &     7.26E-04 &     2.77&     1.56E-03 &     2.77  \\  \cline{2-6}
   & 40 &     3.86E-05 &     4.23&     6.64E-05 &     4.55  \\  \cline{2-6}
   & 80 &     3.55E-06 &     3.44&     5.49E-06 &     3.60  \\  \cline{2-6}
   &160 &     4.20E-07 &     3.08&     6.56E-07 &     3.06  \\  \hline
  \end{tabular}
\label{tab36}
\end{table}

\subsection{Porous media equation}
\label{sec:4.4}

Consider (\ref{C:porus1}) with $m=-1$ and $K=1/2$, and the limiting equation is the porous media equation. We compare the numerical solutions of DG(k+1)-IMEX(k+1), $k=0, 1, 2$ for $\varepsilon=10^{-6}$ with the exact Barenblatt solution of the limiting porous media equation,
\begin{eqnarray}
\begin{cases}
 \rho(x,t)=\frac{1}{R(t)}\left[1-\left(\frac{x}{R(t)}\right)^2\right], \quad j(x,t)=\rho(x,t)\frac{4x}{R(t)^3}, \quad |x|<R(t), \\
 \rho(x,t)=0, \quad j(x,t)=0, \quad |x|>R(t),
 \end{cases}
 \label{porous}
\end{eqnarray}
where $R(t)=[12(t+1)]^{1/3}$, $t\ge0$.

For this example, there are two implementation issues one needs to pay attention to. First, to avoid being divided by zero in the collision term (note $\rho$ is or is close to $0$ in part of the computational domain), the DG scheme is implemented in the nodal fashion \cite{hesthaven2008nodal}. Specifically, we use the Lagrangian basis functions at $k+1$ Gaussian points to represent a polynomial space of degree $k$.
Secondly, when alternating fluxes are used, to ensure the interface of $\rho=0$ propagating outward in time, we choose the right-left flux in the left half of the domain to ensure the left interface of $\rho = 0$ is propagating to the left, and the left-right flux in the right half of the domain to ensure the right interface of $\rho=0$ is propagating to the right. 
With this strategy, in the transition interval which uses the left-right flux as its flux on the left boundary, and right-left flux as its flux on the right boundary, the scheme loses one order accuracy. Because of this, the first order scheme becomes inconsistent and the corresponding result is not presented here.
In  Figure~\ref{fig4}, we show numerical results of DG(k+1)-IMEX(k+1) at $T=3.0$ with $\Delta x=0.5$:  with the left ones using  alternating fluxes for $k=1, 2$, and the right ones using  the central flux for $k=0,1,2$. The numerical solutions very well capture the exact solution of the limiting equation. Moreover, higher order schemes (i.e. DG3-IMEX3) demonstrate better resolution.

\begin{figure}[ht]
\centering
\includegraphics[totalheight=2.0in]{./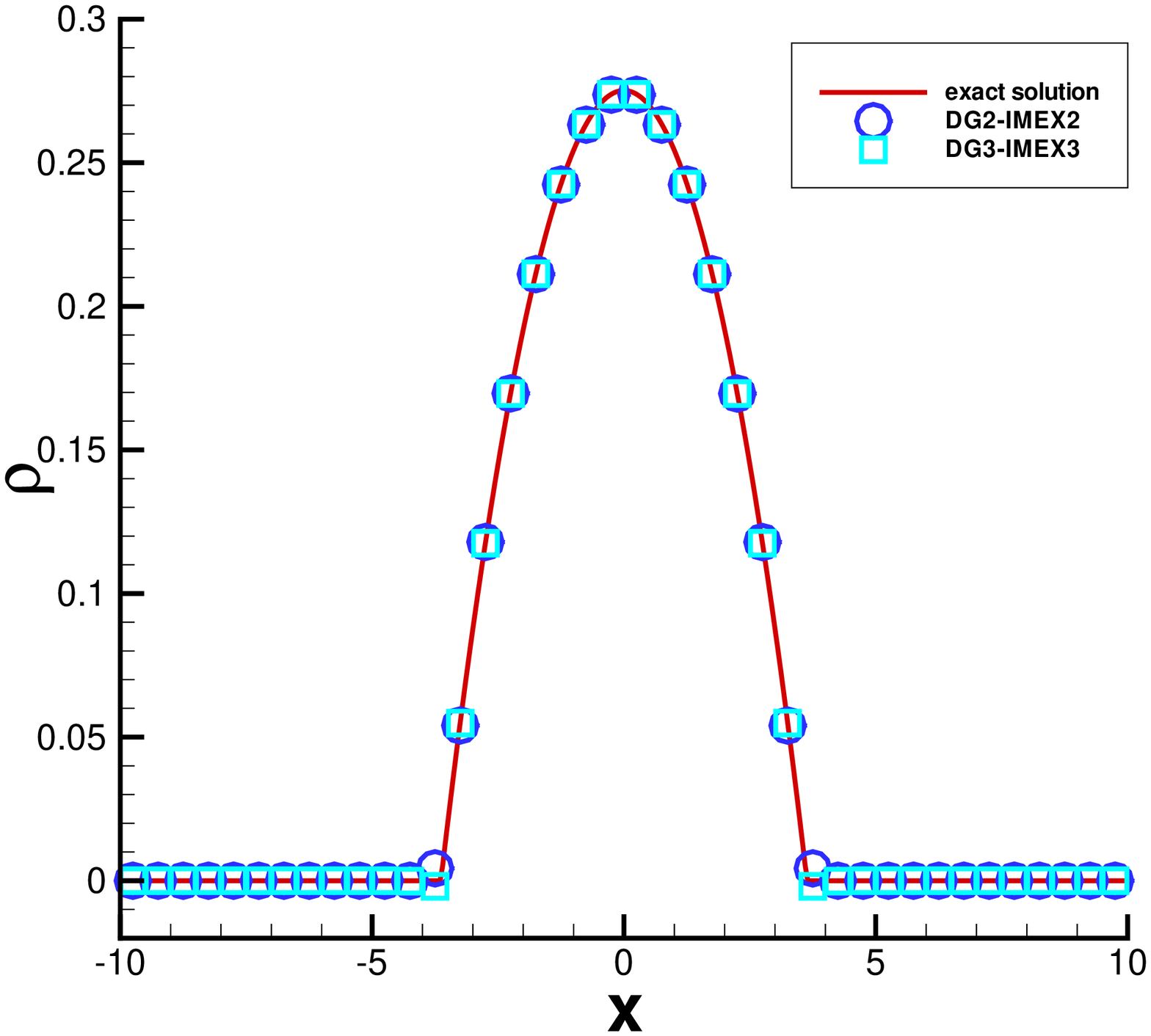},
\includegraphics[totalheight=2.0in]{./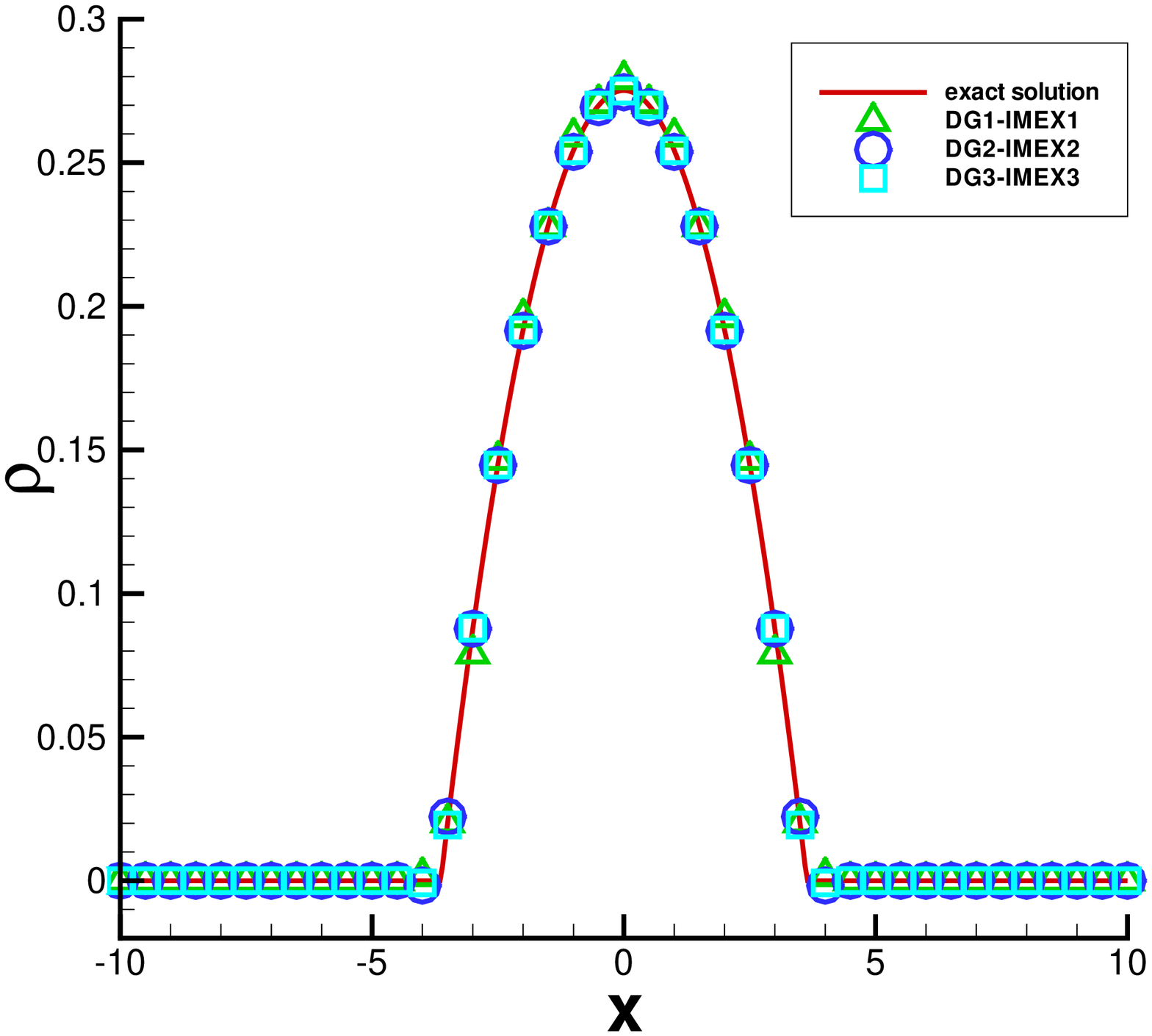}\\
\includegraphics[totalheight=2.0in]{./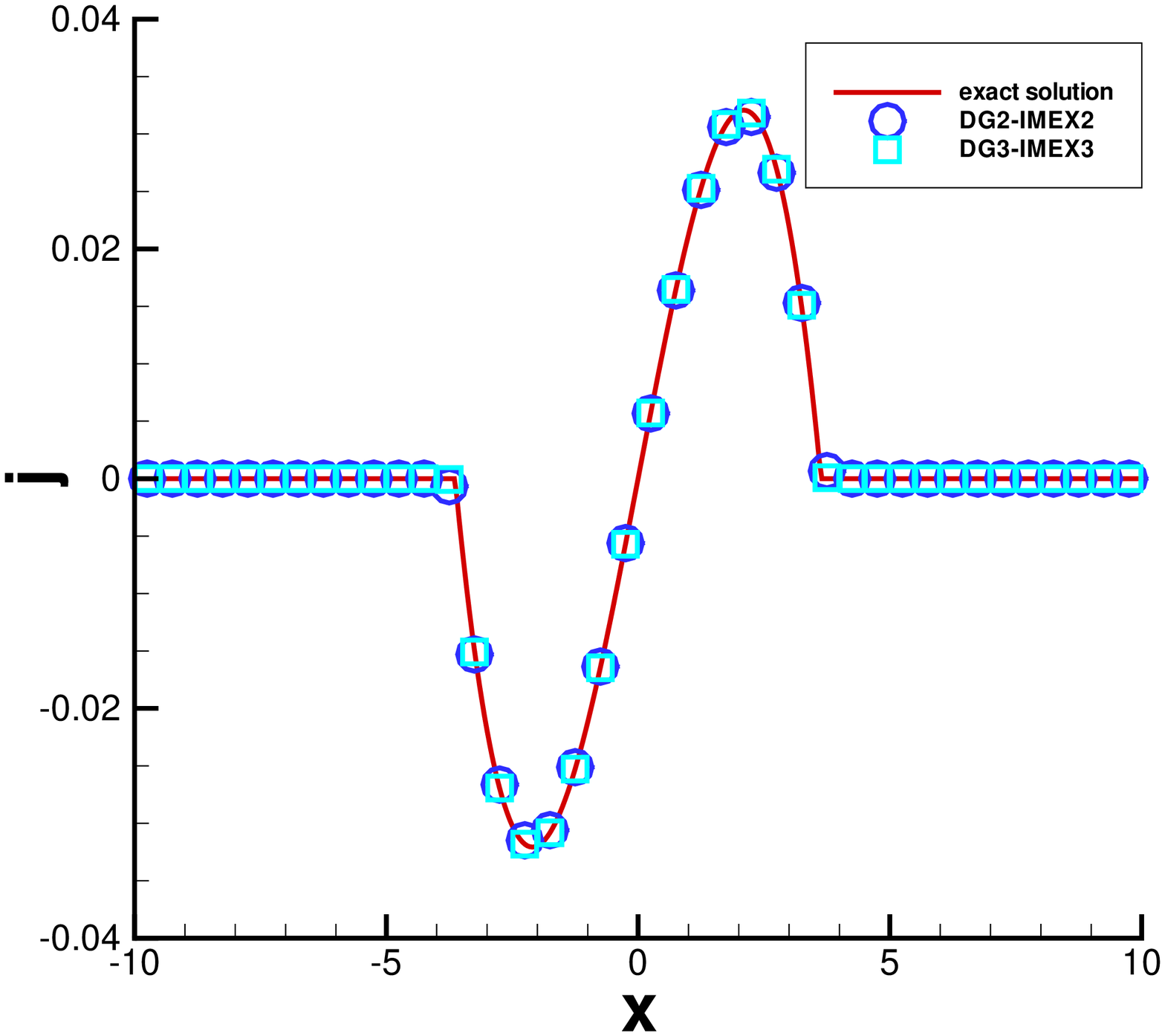},
\includegraphics[totalheight=2.0in]{./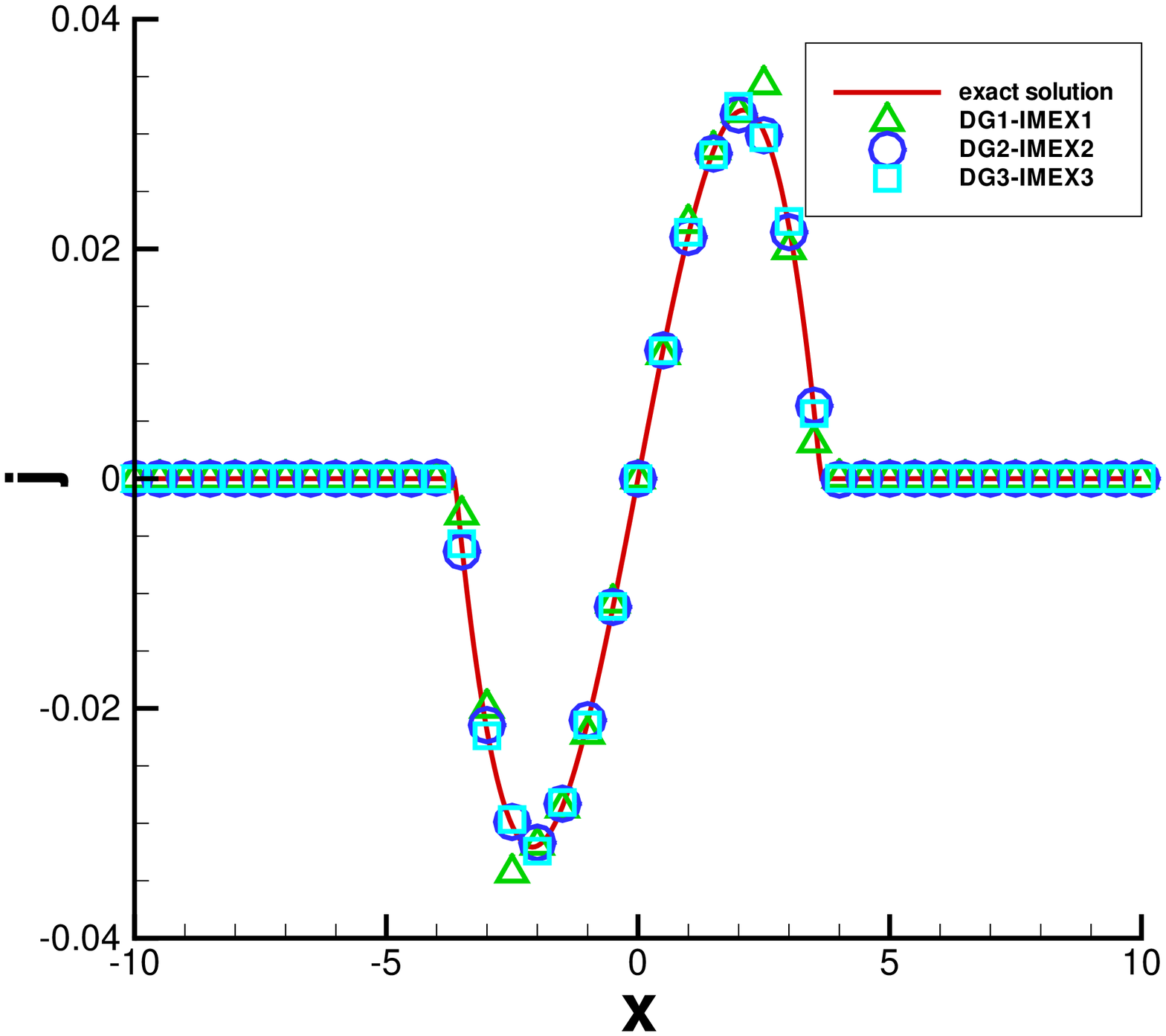}
\caption{Numerical solutions of porous media equation compared with the limiting Barenblatt solution (\ref{porous}). The parabolic regime $\varepsilon=10^{-6}$ at $T=3.0$ with $\Delta x=0.5$. Left: alternating flux; Right: central flux.}
\label{fig4}
\end{figure}

\section{Conclusion}
\label{sec6}
\setcounter{equation}{0}
\setcounter{figure}{0}
\setcounter{table}{0}

This paper is an initial effort in developing high order asymptotic preserving schemes for kinetic equations in different scalings.
For some discrete-velocity kinetic models in a diffusive scaling, we propose to employ arbitrarily high order DG spatial discretizations coupled with high order globally stiffly accurate IMEX scheme for an equivalent micro-macro decomposition of the kinetic equations.
The proposed schemes are asymptotic preserving in the sense of \cite{jin2010asymptotic}. Moreover, the proposed schemes are of high order in both space and time when $\varepsilon$ is of order $1$ and in the limit of $\varepsilon \rightarrow 0$. Some uniform stability analysis and error estimates for the proposed schemes will be rigorously established in a separate paper \cite{JLQX_analysis}.
Extensions to more general kinetic equations with different asymptotic limits will be explored in the future. 

\bigskip
\noindent
{\bf Acknowledgement.}
This project was initiated during the authors' participation at the ICERM Semester Program on ``Kinetic Theory and Computation" in the fall of 2011. The authors want to thank for the generous support from the Institute. Part of the work was done at MFO in Oberwolfach during a Research in Pairs program. The first three authors appreciate the support and hospitality of the Institute.


\bibliographystyle{siam}
\bibliography{refer}

\begin{thebibliography}{10}

\bibitem{ascher1997implicit}
{\sc U.~Ascher, S.~Ruuth, and R.~Spiteri}, {\em Implicit-explicit runge-kutta
  methods for time-dependent partial differential equations}, Applied Numerical
  Mathematics, 25 (1997), pp.~151--167.

\bibitem{bardos1991fluid}
{\sc C.~Bardos, F.~Golse, and D.~Levermore}, {\em Fluid dynamic limits of
  kinetic equations. i. formal derivations}, Journal of statistical physics, 63
  (1991), pp.~323--344.

\bibitem{boscarino2008error}
{\sc S.~Boscarino}, {\em Error analysis of imex runge-kutta methods derived
  from differential-algebraic systems}, SIAM Journal on Numerical Analysis, 45
  (2008), pp.~1600--1621.

\bibitem{boscarino2011implicit}
{\sc S.~Boscarino, L.~Pareschi, and G.~Russo}, {\em Implicit-explicit
  runge--kutta schemes for hyperbolic systems and kinetic equations in the
  diffusion limit}, SIAM Journal on Scientific Computing, 35 (2013),
  pp.~A22--A51.

\bibitem{boscarino2010class}
{\sc S.~Boscarino and G.~Russo}, {\em On a class of uniformly accurate imex
  runge-kutta schemes and applications to hyperbolic systems with relaxation},
  SIAM Journal on Scientific Computing, 31 (2010), p.~1926.

\bibitem{boscarino2013flux}
{\sc S.~Boscarino and G.~Russo}, {\em Flux-explicit imex runge--kutta schemes
  for hyperbolic to parabolic relaxation problems}, SIAM Journal on Numerical
  Analysis, 51 (2013), pp.~163--190.

\bibitem{bourgat1994coupling}
{\sc J.~Bourgat, P.~Le~Tallec, B.~Perthame, and Y.~Qiu}, {\em Coupling
  boltzmann and euler equations without overlapping}, Contemporary Mathematics,
  157 (1994), pp.~377--377.

\bibitem{carrillo2008numerical}
{\sc J.~A. Carrillo, T.~Goudon, P.~Lafitte, and F.~Vecil}, {\em Numerical
  schemes of diffusion asymptotics and moment closures for kinetic equations},
  Journal of Scientific Computing, 36 (2008), pp.~113--149.

\bibitem{cockburn1998local}
{\sc B.~Cockburn and C.-W. Shu}, {\em The local discontinuous galerkin method
  for time-dependent convection-diffusion systems}, SIAM Journal on Numerical
  Analysis, 35 (1998), pp.~2440--2463.

\bibitem{cockburn2001runge}
\leavevmode\vrule height 2pt depth -1.6pt width 23pt, {\em {Runge--Kutta
  discontinuous Galerkin methods for convection-dominated problems}}, Journal
  of Scientific Computing, 16 (2001), pp.~173--261.

\bibitem{coulombel2005diffusion}
{\sc J.-F. Coulombel, F.~Golse, and T.~Goudon}, {\em Diffusion approximation
  and entropy-based moment closure for kinetic equations}, Asymptotic Analysis,
  45 (2005), pp.~1--39.

\bibitem{degond2005smooth}
{\sc P.~Degond and S.~Jin}, {\em A smooth transition model between kinetic and
  diffusion equations}, SIAM journal on numerical analysis, 42 (2005),
  pp.~2671--2687.

\bibitem{guermond2010asymptotic}
{\sc J.-L. Guermond and G.~Kanschat}, {\em Asymptotic analysis of upwind
  discontinuous galerkin approximation of the radiative transport equation in
  the diffusive limit}, SIAM Journal on Numerical Analysis, 48 (2010),
  pp.~53--78.

\bibitem{hesthaven2008nodal}
{\sc J.~S. Hesthaven and T.~Warburton}, {\em Nodal discontinuous Galerkin
  methods: algorithms, analysis, and applications}, vol.~54, Springerverlag New
  York, 2008.

\bibitem{JLQX_analysis}
{\sc J.~Jang, F.~Li, J.-M. Qiu, and T.~Xiong}, {\em Analysis of asymptotic
  preserving dg-imex schemes for discrete-velocity kinetic equations in a
  diffusive scaling}, in preparation.

\bibitem{JiangShu1994}
{\sc G.-S. Jiang, , and C.-W. Shu}, {\em {On a cell entropy inequality for
  discontinuous Galerkin methods}}, Mathematics of Computation, 62 (1994),
  pp.~531--538.

\bibitem{jin2010asymptotic}
{\sc S.~Jin}, {\em Asymptotic preserving (ap) schemes for multiscale kinetic
  and hyperbolic equations: a review}, Lecture Notes for Summer School on�
  Methods and Models of Kinetic Theory�(M\&MKT), Porto Ercole (Grosseto,
  Italy),  (2010).

\bibitem{jin1996numerical}
{\sc S.~Jin and C.~Levermore}, {\em Numerical schemes for hyperbolic
  conservation laws with stiff relaxation terms}, Journal of computational
  physics, 126 (1996), pp.~449--467.

\bibitem{jin1998diffusive}
{\sc S.~Jin, L.~Pareschi, and G.~Toscani}, {\em Diffusive relaxation schemes
  for multiscale discrete-velocity kinetic equations}, SIAM Journal on
  Numerical Analysis, 35 (1998), pp.~2405--2439.

\bibitem{jin2000uniformly}
\leavevmode\vrule height 2pt depth -1.6pt width 23pt, {\em Uniformly accurate
  diffusive relaxation schemes for multiscale transport equations}, SIAM
  Journal on Numerical Analysis, 38 (2000), pp.~913--936.

\bibitem{klar1998asymptotic}
{\sc A.~Klar}, {\em An asymptotic-induced scheme for nonstationary transport
  equations in the diffusive limit}, SIAM journal on numerical analysis, 35
  (1998), pp.~1073--1094.

\bibitem{lemou2010new}
{\sc M.~Lemou and L.~Mieussens}, {\em A new asymptotic preserving scheme based
  on micro-macro formulation for linear kinetic equations in the diffusion
  limit}, SIAM Journal on Scientific Computing, 31 (2010), pp.~334--368.

\bibitem{levermore1996moment}
{\sc C.~D. Levermore}, {\em Moment closure hierarchies for kinetic theories},
  Journal of Statistical Physics, 83 (1996), pp.~1021--1065.

\bibitem{liu2010analysis}
{\sc J.~Liu and L.~Mieussens}, {\em Analysis of an asymptotic preserving scheme
  for linear kinetic equations in the diffusion limit}, SIAM Journal on
  Numerical Analysis, 48 (2010), pp.~1474--1491.

\bibitem{liu2004boltzmann}
{\sc T.-P. Liu and S.-H. Yu}, {\em Boltzmann equation: micro-macro
  decompositions and positivity of shock profiles}, Communications in
  mathematical physics, 246 (2004), pp.~133--179.

\bibitem{naldi1998numerical}
{\sc G.~Naldi and L.~Pareschi}, {\em Numerical schemes for kinetic equations in
  diffusive regimes}, Applied mathematics letters, 11 (1998), pp.~29--35.

\bibitem{pareschi2005implicit}
{\sc L.~Pareschi and G.~Russo}, {\em Implicit--explicit runge--kutta schemes
  and applications to hyperbolic systems with relaxation}, Journal of
  Scientific computing, 25 (2005), pp.~129--155.

\bibitem{pareschi2011efficient}
\leavevmode\vrule height 2pt depth -1.6pt width 23pt, {\em Efficient asymptotic
  preserving deterministic methods for the boltzmann equation},  (2011).

\bibitem{riviere2008DG}
{\sc B.~Rivi\`ere}, {\em Discontinuous Galerkin methods for solving elliptic
  and parabolic equations: theory and implementation}, SIAM, 2008.

\bibitem{ruijgrok1982completely}
{\sc T.~Ruijgrok and T.~Wu}, {\em A completely solvable model of the nonlinear
  boltzmann equation}, Physica A: Statistical Mechanics and its Applications,
  113 (1982), pp.~401--416.

\bibitem{saint2009hydrodynamic}
{\sc L.~Saint-Raymond}, {\em Hydrodynamic limits of the Boltzmann equation},
  vol.~1971, Springer, 2009.

\bibitem{xu2010local}
{\sc Y.~Xu and C.-W. Shu}, {\em Local discontinuous galerkin methods for
  high-order time-dependent partial differential equations}, Communications in
  Computational Physics, 7 (2010), pp.~1--46.

\end{thebibliography}

\end{document}